\documentclass[10pt,a4paper]{article}
\usepackage{amsmath,amsfonts,amsthm}
\usepackage{fullpage}
\usepackage{authblk}
\usepackage{bbold}
\usepackage{booktabs}
\usepackage{comment}
\usepackage{cite}
\usepackage{enumerate}
\usepackage{general}
\usepackage{hyperref}
\usepackage[bold]{hhtensor}
\usepackage{multirow}
\usepackage{nicefrac}
\usepackage{paralist}
\usepackage{slopetriangle}
\usepackage{subcaption}
\usepackage{xcolor}

\graphicspath{{figures/pdf/}}


\newcommand{\email}[1]{\href{mailto:#1}{#1}}

\usepackage{parskip}


\makeatletter
\def\thm@space@setup{%
  \thm@preskip=\parskip \thm@postskip=0pt
}
\makeatother

\usetikzlibrary{external}
\tikzexternalize


\newcommand{\U}{\vec{U}}
\newcommand{\Uh}[1][k]{\underline{\vec{U}}_h^{#1}}
\newcommand{\UhD}[1][k]{\underline{\vec{U}}_{h,0}^{#1}}
\newcommand{\UT}[1][k]{\underline{\vec{U}}_T^{#1}}
\newcommand{\Ph}[1][k]{P_h^{#1}}
\newcommand{\Wh}[1][k]{W_h^{#1}}

\newcommand{\Ih}[1][k]{\underline{\vec{I}}_h^{#1}}
\newcommand{\IT}[1][k]{\underline{\vec{I}}_T^{#1}}

\newcommand{\lproj}[2][h]{\pi_{#1}^{#2}}
\newcommand{\vproj}[2][h]{\vec{\pi}_{#1}^{#2}}
\newcommand{\mproj}[2][h]{\matr{\pi}_{#1}^{#2}}

\newcommand{\uu}[1][T]{\underline{\vec{u}}_{#1}}
\newcommand{\uhu}[1][T]{\widehat{\underline{\vec{u}}}_{#1}}
\newcommand{\uv}[1][T]{\underline{\vec{v}}_{#1}}
\newcommand{\uhv}[1][T]{\widehat{\underline{\vec{v}}}_{#1}}
\newcommand{\uw}[1][T]{\underline{\vec{w}}_{#1}}

\newcommand{\vu}{\vec{u}}
\newcommand{\vv}{\vec{v}}
\newcommand{\vw}{\vec{w}}
\newcommand{\vf}{\vec{f}}

\newcommand{\uxi}[1][T]{\underline{\vec{\xi}}_{#1}}

\newcommand{\vPhi}{\vec{\Phi}}
\newcommand{\vhPhi}{\widehat{\vec{\Phi}}}
\newcommand{\uvhPhi}[1][h]{\widehat{\underline{\vec{\Phi}}}_{#1}}

\newcommand{\mtau}{\matr{\tau}}


\newcommand{\vhu}{\widehat{\vec{u}}}
\newcommand{\vhv}{\widehat{\vec{v}}}

\newcommand{\hph}{\widehat{p}_h}


\newcommand{\ue}[1][h]{\underline{\vec{e}}_{#1}}
\newcommand{\eps}[1][h]{\epsilon_{#1}}

\newcommand{\Eh}{\mathcal{E}_h}


\newcommand{\GT}[1][k]{\matr{G}_T^{#1}}
\newcommand{\GsT}[1][k]{\matr{G}_{{\rm s},T}^{#1}}
\newcommand{\Gh}[1][k]{\matr{G}_h^{#1}}
\newcommand{\Gsh}[1][k]{\matr{G}_{{\rm s},h}^{#1}}
\newcommand{\pT}[1][k+1]{\vec{r}_T^{#1}}
\newcommand{\ph}[1][k+1]{\vec{r}_h^{#1}}
\newcommand{\dTF}[1][k]{\vec{\delta}_{TF}^{#1}}
\newcommand{\DT}[1][k]{D_T^{#1}}
\newcommand{\Dh}[1][k]{D_h^{#1}}

\newcommand{\MVPROD}{\,}

\newcommand{\Reynolds}{\rm Re}


\title{A Hybrid High-Order method for the steady incompressible Navier--Stokes problem\footnote{The work of D. A. Di Pietro was partially supported by Agence Nationale de la Recherche project HHOMM (ANR-15-CE40-0005).}}
\author[1]{Daniele A. Di Pietro\footnote{\email{daniele.di-pietro@umontpellier.fr}}}
\affil[1]{Universit\'{e} de Montpellier, Institut Montpelli\'{e}rain Alexander Grothendieck, 34095 Montpellier, France}
\author[2]{Stella Krell\footnote{stella.krell@unice.fr}}
\affil[2]{Universit\'{e} de Nice, Laboratoire J. A. Dieudonn\'{e}, avenue Valrose, 06000 Nice, France}

\begin{document}

\maketitle

\begin{abstract}
  In this work we introduce and analyze a novel Hybrid High-Order method for the steady incompressible Navier--Stokes equations.
  The proposed method is inf-sup stable on general polyhedral meshes, supports arbitrary approximation orders, and is (relatively) inexpensive thanks to the possibility of statically condensing a subset of the unknowns at each nonlinear iteration.
  We show under general assumptions the existence of a discrete solution, which is also unique provided a data smallness condition is verified.
  Using a compactness argument, we prove convergence of the sequence of discrete solutions to minimal regularity exact solutions for general data.
  For more regular solutions, we prove optimal convergence rates for the energy-norm of the velocity and the $L^2$-norm of the pressure under a standard data smallness assumption.
  More precisely, when polynomials of degree $k\ge 0$ at mesh elements and faces are used, both quantities are proved to converge as $h^{k+1}$ (with $h$ denoting the meshsize).
  \medskip \\
  \noindent\emph{2010 Mathematics Subject Classification:} 65N08, 65N30, 65N12, 35Q30, 76D05
  \smallskip \\
  \noindent\emph{Keywords:} Hybrid High-Order, incompressible Navier--Stokes, polyhedral meshes, compactness, error estimates
\end{abstract}

\section{Introduction}

In this work we introduce and analyze a novel Hybrid High-Order (HHO) method for the steady incompressible Navier--Stokes equations.
The proposed method is inf-sup stable on general meshes including polyhedral elements and nonmatching interfaces, it supports arbitrary approximation order, and has a reduced computational cost thanks to the possibility of statically condensing a subset of both velocity and pressure degrees of freedom (DOFs) at each nonlinear iteration.
A complete analysis covering general data is provided.

Let $\Omega\subset\Real^d$, $d\in\{2,3\}$, denote a bounded connected open polyhedral domain.
The incompressible Navier--Stokes problem consists in finding the velocity field $\vu:\Omega\to\Real^d$ and the pressure field $p:\Omega\to\Real$ such that
\begin{subequations}\label{eq:strong}
  \begin{alignat}{2}
    \label{eq:strong:momentum}
    -\nu\LAPL\vu + \GRAD\vu\MVPROD\vu + \GRAD p &= \vf &\qquad& \text{in $\Omega$},
    \\
    \label{eq:strong:mass}
    \DIV\vu &= 0 &\qquad& \text{in $\Omega$},
    \\
    \label{eq:strong:bc}
    \vu &= \vec{0} &\qquad& \text{on $\partial\Omega$},
    \\
    \label{eq:strong:zero-mean}
    \int_\Omega p &= 0,
  \end{alignat}
\end{subequations}
where $\nu>0$ denotes the (constant) kinematic viscosity and $\vf\in L^2(\Omega)^d$ a volumetric body force.
For the convective term, here and in what follows we will use the matrix-vector product notation with vector quantities treated as column vectors, so that, e.g., $\GRAD\vu\MVPROD\vu = \sum_{j=1}^d(\partial_j u_i)u_j$ with $u_i$ denoting the $i$th component of $\vu$ and $\partial_j$ the derivative along the $j$th direction.
Let $\U\eqbydef H_0^1(\Omega)^d$ and $P\eqbydef L^2_0(\Omega)$ with $L^2_0(\Omega)\eqbydef\left\{q\in L^2(\Omega)\st\int_\Omega q = 0\right\}$.
A classical weak formulation of problem \eqref{eq:strong} reads:
Find $(\vu,p)\in\U\times P$ such that
\begin{subequations}\label{eq:weak}
  \begin{alignat}{2}\label{eq:weak:momentum}
    \nu a(\vu,\vv) + t(\vu,\vu,\vv) + b(\vv,p) &= \int_\Omega\vf\SCAL\vv &\qquad& \forall\vv\in\U,
    \\ \label{eq:weak:mass}
    -b(\vu,q) &= 0 &\qquad& \forall q\in P,
  \end{alignat}
\end{subequations}
with bilinear forms $a:\U\times\U\to\Real$ and $b:\U\times P\to\Real$ and trilinear form $t:\U\times\U\times\U\to\Real$ such that
\begin{equation}\label{eq:a.b.t}
  a(\vu,\vv)\eqbydef\int_\Omega\GRAD\vu\SSCAL\GRAD\vv,\qquad
  b(\vv,q)\eqbydef-\int_\Omega(\DIV\vv) q,\qquad
  t(\vw,\vu,\vv)\eqbydef\frac12\int_\Omega\vv\trans\GRAD\vu\MVPROD\vw - \frac12\int_\Omega\vu\trans\GRAD\vv\MVPROD\vw,
\end{equation}
where $\trans$ denotes the transpose operator.
  Notice that all the discussion below can be easily adapted to more general boundary conditions, but we stick to the homogeneous Dirichlet case for simplicity of exposition.
Also, the modifications to handle variable kinematic viscosities are briefly discussed in Remark~\ref{rem:var.nu}.

The literature on the numerical approximation of problem \eqref{eq:weak} is vast, and giving a detailed account lies out of the scope of the present work.
We therefore mention here only those numerical methods which share similar features with our approach.
The possibility to increase the approximation order and, possibly, to use general meshes meshes analogous to the ones considered here are supported by discontinuous Galerkin (DG) methods.
Their application to the incompressible Navier--Stokes problem has been considered in several works starting from the early 00's; a non exhaustive list of references includes \cite{Karakashian.Katsaounis:00,Cockburn.Kanschat.ea:05,Girault.Riviere.ea:05,Bassi.Crivellini.ea:06,Bassi.Crivellini.ea:07,Di-Pietro.Ern:10,Bassi.Botti.ea:12*1,Riviere.Sardar:14,Tavelli.Dumbser:14}; cf. also \cite[Chapter 6]{Di-Pietro.Ern:12} for a pedagogical introduction.
The Hybridizable discontinuous Galerkin (HDG) method of \cite{Castillo.Cockburn.ea:00,Cockburn.Gopalakrishnan.ea:09} has also been applied to the discretization of the incompressible Navier--Stokes equations in several recent works \cite{Nguyen.Peraire.ea:11,Cesmelioglu.Cockburn.ea:15,Giorgiani.Fernandez-Mendez.ea:14,Qiu.Shi:16,Ueckermann.Lermusiaux:16}.
Albeit this is not explicitly pointed out in all of the above references, also HDG methods often support general meshes as well as the possibility to increase the approximation order.
The relation between HDG and HHO methods (originally introduced in~\cite{Di-Pietro.Ern:15} in the context of quasi-incompressible linear elasticity) has been recently explored in \cite{Cockburn.Di-Pietro.ea:16} for a pure diffusion problem.
Therein it is shown that, for the same set of globally coupled face-based DOFs, the HHO technology can improve the original orders of convergence of HDG methods while using fewer element-based DOFs for the vector variable and reducing the size of the local computations.
This technology can also be used to derive novel HDG methods with the same favorable features as HHO methods; cf., in particular, \cite[Table 1]{Cockburn.Di-Pietro.ea:16} for further details.

The HHO scheme studied in this work hinges on DOFs located at mesh elements and faces that are discontinuous polynomials of degree $k\ge 0$.
Based on these DOFs, by solving local problems inside each element we obtain reconstructions of the velocity and of its gradient that are used to formulate the diffusive and convective terms in the momentum balance equation, as well as a reconstruction of the divergence used in the velocity-pressure coupling term.
More precisely, the discretization of the viscous term stems from a variation of the diffusive bilinear form originally introduced in \cite{Di-Pietro.Ern.ea:14};
for the convective term, we use a skew-symmetric formulation designed so as not to contribute to the kinetic energy balance in the spirit of the design property \cite[(T1)]{Di-Pietro.Ern:10};
the velocity-pressure coupling is, on the other hand, analogous to that of \cite{Aghili.Boyaval.ea:15,Di-Pietro.Ern.ea:16}.

The resulting method has several advantageous features:
\begin{inparaenum}[(i)]
\item it supports general meshes possibly including polyhedral elements and nonmatching interfaces (resulting, e.g., from nonconforming mesh refinement);
\item it allows one to increase the spatial approximation order to accelerate convergence in the presence of (locally) regular solutions;
\item it is (relatively) inexpensive thanks to the possibility of statically condensing all element-based velocity DOFs and all but one pressure DOF per element at each nonlinear iteration (cf. Remark~\ref{rem:static.cond} below for further details).
\end{inparaenum}
Additionally, thanks to the underlying fully discontinuous polynomial spaces, the proposed method can be expected to accommodate abrupt variations of the unknowns in the vicinity of boundary layers.
Existence of a discrete solution is proved in Theorem~\ref{thm:existence} below for general data resorting to classical arguments in nonlinear analysis \cite{Deimling:85}.
Uniqueness, on the other hand, is shown in Theorem~\ref{thm:uniqueness} below under a standard smallness assumption on the volumetric body force.

A complete convergence analysis of the method is carried out.
First, using a compactness argument inspired by the recent literature on finite volume methods (cf., e.g., \cite{Eymard.Gallouet.ea:00,Eymard.Herbin.ea:07,Eymard.Gallouet.ea:10,Chainais-Hillairet.Krell.ea:15}), we show in Theorem~\ref{thm:conv.min.reg} below that the sequence of discrete solutions on a refined mesh family converges (up to a subsequence) to the continuous one for general data and without assuming more regularity for the exact solution than required by the weak formulation.
Convergence extends to the whole sequence when the continuous solution is unique.
The use of compactness techniques in the context of high-order methods is quite original, and we can only name \cite{Di-Pietro.Ern:10,Di-Pietro.Ern:12} when it comes to the Navier--Stokes problem.
Key technical results required to prove convergence by compactness are the discrete Sobolev embeddings and compactness results recently proved in \cite{Di-Pietro.Droniou:16} in the context of nonlinear Leray--Lions problems.

Then, in Theorem~\ref{thm:err.est} below, we prove error estimates for regular exact solutions under a suitable data smallness assumption.
When polynomials of degree $k\ge 0$ are used, we show that both the energy-norm of the velocity and the $L^2$-norm of the pressure converge as $h^{k+1}$ ($h$ denotes here the meshsize).
These convergence rates are similar to the ones recently derived in \cite{Qiu.Shi:16} for a HDG method with pressure and velocity spaces chosen as in~\cite{Egger.Waluga:13,Wang.Ye:16}.
A major difference with respect to \cite{Qiu.Shi:16} is that we obtain them here using polynomials $k$ instead of $(k+1)$ inside mesh elements (this is precisely one of the major outcomes of the HHO technology identified in \cite{Cockburn.Di-Pietro.ea:16}).
Another difference with respect to \cite{Cesmelioglu.Cockburn.ea:15,Qiu.Shi:16} is that our trilinear form is expressed in terms of a discrete gradient reconstruction and designed so that it does not contribute to the kinetic energy balance, a feature which simplifies several arguments in the analysis; cf. Remark \ref{rem:th.HDG} for further details.
We also show numerically that the $L^2$-norm of the error on the velocity converges as $h^{k+2}$. This result is not surprising, as a similar analysis as the one of \cite{Qiu.Shi:16} can be expected to apply also in our case (the details are postponed to a future work).

The rest of this paper is organized as follows.
In Section~\ref{sec:mesh} we introduce mesh-related notations and recall a few basic results on broken functional spaces.
In Section~\ref{sec:discr} we define the local reconstructions, formulate the discretizations of the various terms appearing in \eqref{eq:weak}, and state the discrete problem.
In Section~\ref{sec:conv} we discuss the existence and uniqueness of a discrete solution, prove convergence to minimal regularity exact solutions for general data, and estimate the convergence rate for smooth exact solutions and small data.
The theoretical results are illustrated on a numerical example including a comparison with a HDG-inspired trilinear form.
In Section~\ref{sec:properties} we give proof of the properties of the viscous and velocity-pressure coupling bilinear forms and of the convective trilinear form used in the analysis.


\section{Mesh and basic results}\label{sec:mesh}

Let ${\cal H}\subset \Real_*^+ $ denote a countable set of meshsizes having $0$ as its unique accumulation point.
We consider refined mesh sequences $(\Th)_{h \in {\cal H}}$ where, for all $ h \in {\cal H} $, $\Th = \{T\}$ is a finite collection of nonempty disjoint open polyhedral elements such that $\overline{\Omega}=\bigcup_{T\in\Th}\closure{T}$ and $h=\max_{T\in\Th} h_T$
($h_T$ stands for the diameter of $T$).
A hyperplanar closed connected subset $F$ of $\closure{\Omega}$ is called a face if it has nonzero $(d-1)$-dimensional Hausdorff measure and
\begin{inparaenum}[(i)]
\item either there exist $T_1,T_2\in\Th $ such that $F=\partial T_1\cap\partial T_2$ (and $F$ is an interface) or 
\item there exists $T\in\Th$ such that $F=\partial T\cap\partial\Omega$ (and $F$ is a boundary face).
\end{inparaenum}
The set of interfaces is denoted by $\Fhi$, the set of boundary faces by $\Fhb$, and we let
$\Fh\eqbydef\Fhi\cup\Fhb$.
For all $T\in\Th$, the set $\Fh[T]\eqbydef\{F\in\Fh\st F\subset\partial T\}$ collects the faces lying on the boundary of $T$ and, for all $F\in\Fh[T]$, we denote by $\normal_{TF}$ the normal to $F$ pointing out of $T$.
A normal vector $\normal_F$ is associated to each internal face by fixing once and for all an (arbitrary) orientation, whereas for boundary faces $\normal_F$ points out of $\Omega$.

We assume that $(\Th)_{h\in{\cal H}}$ is admissible in the sense of \cite[Chapter~1]{Di-Pietro.Ern:12}, i.e., for all $h\in{\cal H}$, $\Th$ admits a matching simplicial submesh $\fTh$ and there exists a real number $\varrho>0$ (the mesh regularity parameter) independent of $h$ such that the following conditions hold:
\begin{inparaenum}[(i)]
\item for all $h\in{\cal H}$ and every simplex $S\in\fTh$ of diameter $h_S$ and inradius $r_S$, $\varrho h_S\le r_S$;
\item for all $h\in{\cal H}$, all $T\in\Th$, and all $S\in\fTh$ such that $S\subset T$, $\varrho h_T \le h_S$.
\end{inparaenum}
We refer to \cite[Chapter~1]{Di-Pietro.Ern:12} and \cite{Di-Pietro.Droniou:16,Di-Pietro.Droniou:16*1} for a set of geometric and functional analytic results valid on admissible meshes.
We recall, in particular, that, under these regularity assumptions, the number of faces of each element is uniformly bounded.

Let $X$ be a subset of $\Real^d$ and, for an integer $l\ge 0$, denote by $\Poly{l}(X)$ the space spanned by the restrictions to $X$ of polynomials in the space variables of total degree $l$.
In what follows, the set $X$ will represent a mesh element or face.
We denote by $\lproj[X]{l}:L^1(X)\to\Poly{l}(X)$ the $L^2$-orthogonal projector such that, for all $v\in L^1(X)$,
\begin{equation}\label{eq:lproj}
  \int_X(v-\lproj[X]{l}v)w = 0\qquad\forall w\in\Poly{l}(X).
\end{equation}
The vector- and matrix-valued $L^2$-orthogonal projectors, both denoted by $\vproj[X]{l}$, are obtained applying $\lproj[X]{l}$ component-wise.
The following optimal $W^{s,p}$-approximation properties are proved in \cite[Appendix A.2]{Di-Pietro.Droniou:16} using the classical theory of \cite{Dupont.Scott:80} (cf. also \cite[Chapter 4]{Brenner.Scott:08}):
There is $C>0$ such that, for all $l\ge 0$, all $h\in{\cal H}$, all $T\in\Th$, all $s\in\{1,\ldots,l+1\}$, all $p\in[1,+\infty]$, all $v\in W^{s,p}(T)$, and all $m\in\{0,\ldots,s-1\}$,
\begin{equation}\label{eq:lproj.approx}
  \seminorm[W^{m,p}(T)]{v-\lproj[T]{l}v} + h_T^{\frac1p}\seminorm[{W^{m,p}(\Fh[T]})]{v-\lproj[T]{l}v}\le C h_T^{s-m}\seminorm[W^{s,p}(T)]{v},
\end{equation}
where $W^{m,p}(\Fh[T])$ is spanned by functions that are in $W^{m,p}(F)$ for all $F\in\Fh[T]$.
At the global level, the space of broken polynomial functions on $\Th$ of degree $l$ is denoted by $\Poly{l}(\Th)$, and $\lproj{l}$ is the corresponding $L^2$-orthogonal projector.
The broken gradient operator on $\Th$ is denoted by $\GRADh$.

Let $p\in[1,+\infty]$. We recall the following continuous trace inequality:
There is $C>0$ such that, for all $h\in{\cal H}$ and all $T\in\Th$ it holds for all $v\in W^{1,p}(T)$,
\begin{equation}\label{eq:trace.cont}
  h_T^{\frac1p}\norm[L^p(\partial T)]{v}\le C\left( \norm[L^p(T)]{v} + h_T\norm[L^p(T)^d]{\GRAD v}\right).
\end{equation}
Let an integer $l\ge 0$ be fixed.
Using \eqref{eq:trace.cont} followed by the discrete inverse inequality
\begin{equation}\label{eq:inverse}
  \norm[L^p(T)^d]{\GRAD v}\le Ch_T^{-1}\norm[L^p(T)]{v},
\end{equation}
valid for all $T\in\Th$ and $v\in\Poly{l}(T)$ with $C>0$ independent of $h$ and of $T$, we obtain the following discrete trace inequality:
There is $C>0$ such that, for all $h\in{\cal H}$ and all $T\in\Th$ it holds for all $v\in\Poly{l}(T)$,
\begin{equation}\label{eq:trace}
  h_T^{\frac1p}\norm[L^p(\partial T)]{v}\le C\norm[L^p(T)]{v}.
\end{equation}

Throughout the paper, we often write $a\lesssim b$ (resp. $a\gtrsim b$) to mean $a\le Cb$ (resp. $a\ge Cb$) with real number $C>0$ independent of the meshsize $h$ and of the kinematic viscosity $\nu$.
Constants are named when needed in the discussion.


\section{Discretization}\label{sec:discr}

In this section we define the discrete counterparts of the various terms appearing in \eqref{eq:weak}, state their properties, and formulate the discrete problem.

\subsection{Discrete spaces}
Let a polynomial degree $k\ge 0$ be fixed.
We define the following hybrid space containing element-based and face-based velocity DOFs:
\begin{equation}\label{eq:Uh}
  \Uh\eqbydef\left(
  \bigtimes_{T\in\Th}\Poly{k}(T)^d
  \right)\times\left(
  \bigtimes_{F\in\Fh}\Poly{k}(F)^d
  \right).
\end{equation}
For the elements of $\Uh$ we use the underlined notation $\uv[h] = \left((\vv_T)_{T\in\Th},(\vv_F)_{F\in\Fh}\right)$.
We define the global interpolator $\Ih:H^1(\Omega)^d\to\Uh$ such that, for all $\vv\in H^1(\Omega)^d$,
$$
\Ih\vv\eqbydef\left(
(\vproj[T]{k}\vv)_{T\in\Th}, (\vproj[F]{k}\vv)_{F\in\Fh}
\right).
$$
For every mesh element $T\in\Th$, we denote by $\UT$ and $\IT$ the restrictions to $T$ of $\Uh$ and $\Ih$, respectively.
Similarly, $\uv=(\vv_T,(\vv_F)_{F\in\Fh[T]})$ denotes the restriction to $T$ of a generic vector $\uv[h]\in\Uh$.
Also, for an element $\uv[h]\in\Uh$, we denote by $\vv_h$ (no underline) the broken polynomial function in $\Poly{k}(\Th)^d$ such that $\restrto{\vv_h}{T}=\vv_T$ for all $T\in\Th$.
Finally, we define on $\Uh$ the following seminorm:
\begin{equation}\label{eq:norm1h}
  \norm[1,h]{\uv[h]}^2\eqbydef\sum_{T\in\Th}\norm[1,T]{\uv[T]}^2,
\end{equation}
where, for all $T\in\Th$,
\begin{equation}\label{eq:norm1T}
  \norm[1,T]{\uv[T]}^2\eqbydef\norm[T]{\GRAD\vv_T}^2 + \seminorm[1,\partial T]{\uv[T]}^2,
  \qquad\seminorm[1,\partial T]{\uv[T]}^2\eqbydef\sum_{F\in\Fh[T]}h_F^{-1}\norm[F]{\vv_F-\vv_T}^2.
\end{equation}
The following boundedness property holds for the global interpolator $\Ih$:
For all $\vv\in H^1(\Omega)^d$,
\begin{equation}\label{eq:Ih.cont}
  \norm[1,h]{\Ih\vv}\le C_I\norm[H^1(\Omega)^d]{\vv},
\end{equation}
with real number $C_I>0$ independent of $h$.

The following velocity and pressure spaces embed the homogeneous boundary conditions for the velocity and the zero-average constraint for the pressure, respectively:
\begin{equation}\label{eq:UhD.Ph}
  \UhD\eqbydef\left\{
  \uv[h]\in\Uh\st\vv_F=\vec{0}\quad\forall F\in\Fhb
  \right\},\qquad
  \Ph\eqbydef\left\{
  q_h\in\Poly{k}(\Th)\;\Big|\; \int_\Omega q_h=0
  \right\}.
\end{equation}

It is a simple matter to check that the map $\norm[1,h]{{\cdot}}$ defines a norm on $\UhD$.
We also note the following discrete Sobolev embeddings, a consequence of \cite[Proposition 5.4]{Di-Pietro.Droniou:16}:
For all $p\in\lbrack 1,+\infty)$ if $d=2$, $p\in[1,6]$ if $d=3$, it holds for all $\uv[h]\in\UhD$,
\begin{equation}\label{eq:sobolev}
  \norm[L^p(\Omega)^d]{\vv_h}\le C_s\norm[1,h]{\uv[h]},
\end{equation}
with real number $C_s>0$ independent of $h$.

\subsection{Reconstructions of differential operators}

Let an element $T\in\Th$ be fixed.
For any polynomial degree $l\ge 0$, we define the local gradient reconstruction operator $\GT[l]:\UT\to\Poly{l}(T)^{d\times d}$ such that, for all $\uv[T]\in\UT$ and all $\mtau\in\Poly{l}(T)^{d\times d}$,
\begin{subequations}
  \begin{align}\label{eq:GT}
    \int_T\GT[l]\uv[T]\SSCAL\mtau
    &=-\int_T\vv_T\SCAL(\DIV\mtau)
    + \sum_{F\in\Fh[T]}\int_F\vv_F\SCAL(\mtau\MVPROD\normal_{TF})
    \\\label{eq:GT'}
    &= \int_T\GRAD\vv_T\SSCAL\mtau
    + \sum_{F\in\Fh[T]}\int_F(\vv_F-\vv_T)\SCAL(\mtau\MVPROD\normal_{TF}),
  \end{align}
\end{subequations}
where we have used integration by parts to pass to the second line.
In \eqref{eq:GT}, the right-hand is designed to resemble an integration by parts formula where the role of the function in volumetric and boundary integrals is played by element-based and face-based DOFs, respectively.

For the discretization of the viscous term, we will need the local velocity reconstruction operator $\pT:\UT\to\Poly{k+1}(T)^d$ obtained in a similar spirit as $\GT$ and such that, for all $\uv[T]\in\UT$,
\begin{equation}\label{eq:pT}
\int_T\GRAD\pT\uv[T]\SSCAL\GRAD\vw = -\int_T\vv_T\SCAL\LAPL\vw + \sum_{F\in\Fh[T]}\int_F\vv_F\SCAL(\GRAD\vw\MVPROD\normal_{TF})
\qquad\forall\vw\in\Poly{k+1}(T)^d,\qquad
\end{equation}
with closure condition $\int_T(\pT\uv[T]-\vv_T)=\vec{0}$.

Finally, we define the discrete divergence operator $\DT:\UT\to\Poly{k}(T)$ such that, for all $\uv[T]\in\UT$ and all $q\in\Poly{k}(T)$,
\begin{subequations}
  \begin{align}\label{eq:DT}
    \int_T\DT\uv[T]q
    &= -\int_T\vv_T\SCAL\GRAD q
    + \sum_{F\in\Fh[T]}\int_F(\vv_F\SCAL\normal_{TF})q
    \\ \label{eq:DT'}
    &= \int_T(\DIV\vv_T) q + \sum_{F\in\Fh[T]}\int_F(\vv_F-\vv_T)\SCAL\normal_{TF} q,
  \end{align}
\end{subequations}
where we have used integration by parts to pass to the second line.
By definition, we have
\begin{equation}\label{eq:DT.trGT}
  \DT={\rm tr}(\GT).
\end{equation}

We also define global versions of the gradient, velocity reconstruction, and divergence operators letting $\Gh[l]:\Uh\to\Poly{l}(\Th)^{d\times d}$, $\ph:\Uh\to\Poly{k+1}(\Th)^d$, and $\Dh:\Uh\to\Poly{k}(\Th)$ be such that, for all $\uv[h]\in\Uh$ and all $T\in\Th$,
$$
\restrto{(\Gh[l]\uv[h])}{T} \eqbydef \GT[l]\uv[T],\qquad
\restrto{(\ph\uv[h])}{T}\eqbydef \pT\uv[T],\qquad
\restrto{(\Dh\uv[h])}{T} \eqbydef \DT\uv[T].
$$

\begin{proposition}[{Properties of $\Gh[l]$}]\label{prop:Gh.properties}
  The global discrete gradient operator $\Gh[l]$ satisfies the following properties:
  \begin{enumerate}[1)]
  \item \emph{Boundedness.} For all $l\ge 0$ and all $\uv[h]\in\Uh$, it holds
    \begin{equation}\label{eq:Gh.cont}
      \norm[L^2(\Omega)^{d\times d}]{\Gh[l]\uv[h]}\lesssim\norm[1,h]{\uv[h]}.  
    \end{equation}
  \item \emph{Consistency.} For all $l\ge 0$ and all $\vv\in H^{m}(\Omega)^d$ with $m=l+2$ if $l\le k$, $m=k+1$ otherwise,
    \begin{equation}\label{eq:Gh.approx}
      \norm[L^2(\Omega)^{d\times d}]{\Gh[l]\Ih\vv-\GRAD\vv}
      + \left(\sum_{T\in\Th} h_T \norm[L^2(\partial T)^{d\times d}]{\GT[l]\IT\vv-\GRAD\vv}^2\right)^{\frac12}
      \lesssim h^{m-1}\norm[H^{m}(\Omega)^d]{\vv}.
    \end{equation}
    As a consequence, for all $\vPhi\in C_c^\infty(\Omega)^d$ and all $l,k$ such that $m>1$ (i.e., provided $l=0$ if $k=0$), $\Gh[l]\Ih\vPhi\to\GRAD\vPhi$ strongly in $L^2(\Omega)^{d\times d}$.
  \item \emph{Sequential consistency.} Let $(\uv[h])_{h\in{\cal H}}$ denote a sequence in $(\UhD)_{h\in{\cal H}}$ bounded in the $\norm[1,h]{{\cdot}}$-norm.
  Then, there is $\vv\in\U$ such that
  \begin{itemize}
  \item $\vv_h\to\vv$ strongly in $L^p(\Omega)^d$ for all $p\in[1,+\infty)$ if $d=2$, $p\in\lbrack 1,6)$ if $d=3$;
  \item $\Gh[l]\uv[h]\rightharpoonup\GRAD\vv$ weakly in $L^2(\Omega)^{d\times d}$ for all $l\ge 0$.
  \end{itemize}
  \end{enumerate}
\end{proposition}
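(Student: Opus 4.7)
For the boundedness bound \eqref{eq:Gh.cont}, the natural move is to test \eqref{eq:GT'} with $\mtau = \GT[l]\uv[T]$ on each $T$: Cauchy--Schwarz handles the volume contribution, and on each face Cauchy--Schwarz followed by the discrete trace inequality \eqref{eq:trace} (together with mesh regularity to replace $h_F$ by $h_T$) bounds the boundary term by $\seminorm[1,\partial T]{\uv[T]}\,\norm[T]{\GT[l]\uv[T]}$. Dividing by $\norm[T]{\GT[l]\uv[T]}$ and summing over $T$ yields \eqref{eq:Gh.cont}.

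The consistency bound \eqref{eq:Gh.approx} splits into two regimes. When $l \le k$, testing \eqref{eq:GT} applied to $\IT\vv$ against arbitrary $\mtau \in \Poly{l}(T)^{d\times d}$, the degree conditions $\DIV\mtau \in \Poly{l-1}(T)^d$ and $\mtau\MVPROD\normal_{TF} \in \Poly{l}(F)^d$ place both tensors in the ranges of $\vproj[T]{k}$ and $\vproj[F]{k}$; the projections hidden in $\IT\vv$ therefore disappear, and integration by parts identifies the commutativity $\GT[l]\IT\vv = \vproj[T]{l}(\GRAD\vv)$. The volume and boundary pieces of \eqref{eq:Gh.approx} with $m = l+2$ then follow from \eqref{eq:lproj.approx} applied to $\GRAD\vv \in H^{l+1}$. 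When $l > k$ the commutativity breaks, and I would instead subtract the integration-by-parts formula for $\GRAD\vv$ from \eqref{eq:GT} with $\uv[T] = \IT\vv$ to obtain
\[
\int_T(\GT[l]\IT\vv - \GRAD\vv)\SSCAL\mtau = \int_T(\vv - \vproj[T]{k}\vv)\SCAL\DIV\mtau - \sum_{F\in\Fh[T]}\int_F(\vv - \vproj[F]{k}\vv)\SCAL(\mtau\MVPROD\normal_{TF}).
\]
Using orthogonality of $\vproj[T]{k}$ and $\vproj[F]{k}$ to subtract their images of $\DIV\mtau$ and $\mtau\MVPROD\normal_{TF}$, then Cauchy--Schwarz with \eqref{eq:lproj.approx} (for $\vv \in H^{k+1}$) together with the inverse \eqref{eq:inverse} and discrete trace \eqref{eq:trace} on the polynomial $\mtau$, I would reach a bound of the form $h_T^k\seminorm[H^{k+1}(T)]{\vv}\,\norm[T]{\mtau}$. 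Choosing $\mtau = \GT[l]\IT\vv - \vproj[T]{l}(\GRAD\vv)$ and using orthogonality once more gives $\norm[T]{\mtau}\lesssim h_T^k\seminorm[H^{k+1}(T)]{\vv}$; a triangle inequality against the projection error $\vproj[T]{l}(\GRAD\vv) - \GRAD\vv$ (controlled by \eqref{eq:lproj.approx} with $s = k$) delivers the volume part with $m = k+1$, and one more application of \eqref{eq:trace} on the polynomial piece plus \eqref{eq:lproj.approx} on the projection piece handles the face part. The strong convergence $\Gh[l]\Ih\vPhi \to \GRAD\vPhi$ for $\vPhi \in C_c^\infty(\Omega)^d$ follows by the hypothesis $m > 1$.

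For sequential consistency, the $L^p$ strong convergence of $(\vv_h)_h$ along a subsequence to some $\vv \in H_0^1(\Omega)^d$ is precisely the discrete Rellich--Kondrachov result of \cite{Di-Pietro.Droniou:16} invoked in the excerpt, and part~1 makes $(\Gh[l]\uv[h])_h$ bounded in $L^2(\Omega)^{d\times d}$, so a further subsequence converges weakly to some $\matr{G}$. To identify $\matr{G} = \GRAD\vv$ I would test against arbitrary $\vPhi \in C_c^\infty(\Omega)^{d\times d}$: the inclusion $\GT[l]\uv[T] \in \Poly{l}(T)^{d\times d}$ lets me replace $\vPhi$ by $\vproj[T]{l}\vPhi$ element-wise at no cost; applying \eqref{eq:GT} and re-integrating by parts on $\vv_h$ produces a decomposition
\[
\int_\Omega \Gh[l]\uv[h]\SSCAL\vPhi = -\int_\Omega \vv_h\SCAL\DIV\vPhi + \sum_{T\in\Th}\sum_{F\in\Fh[T]}\int_F \vv_F\SCAL(\vPhi\MVPROD\normal_{TF}) + R_h.
\]
The face sum vanishes identically: each interface $F = \partial T_1\cap\partial T_2$ contributes twice with opposite outward normals and a single-valued $\vPhi$, while boundary faces carry $\vv_F = \vec 0$ because $\uv[h]\in\UhD$. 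The remainder $R_h$ assembles volume and face integrals of $\GRADh\vv_h$ and $\vv_F - \vv_T$ tested against $\vPhi - \vproj[h]{l}\vPhi$, and vanishes via \eqref{eq:lproj.approx} (already with $s = 1$) combined with the uniform bound on $\norm[1,h]{\uv[h]}$. Passing to the limit using $\vv_h \to \vv$ strongly in $L^2$ and integrating by parts on the continuous side (with $\vv \in H_0^1$ and $\vPhi$ compactly supported) yields $\int_\Omega \matr{G}\SSCAL\vPhi = \int_\Omega \GRAD\vv\SSCAL\vPhi$; density of $C_c^\infty$ in $L^2$ extends this to all tests, and uniqueness of the limit promotes the subsequential statement to the whole sequence.

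The main obstacle is the $l > k$ branch of the consistency estimate, where the degree gap between the reconstruction space and the interpolation space forces a careful orthogonality-plus-inverse-estimate juggle to extract the optimal $h^k$ rate; by contrast, the algebraic cancellation of the face sum in the weak-convergence step is the clean core that makes sequential consistency essentially a boundedness-plus-smooth-test argument.
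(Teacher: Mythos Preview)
Your proposal is correct and matches the paper's proof essentially line-for-line for parts 1) and 2): the paper also tests \eqref{eq:GT'} with $\mtau=\GT[l]\uv[T]$ for boundedness, derives the commuting property $\GT[l]\IT\vv=\mproj[T]{l}\GRAD\vv$ when $l\le k$, and for $l>k$ takes $\mtau=\GT[l]\IT\vv-\mproj[T]{l}\GRAD\vv$ in the same consistency identity and bounds the right-hand side via Cauchy--Schwarz plus the inverse and discrete trace inequalities. For part 3) the paper simply cites \cite[Proposition~5.6]{Di-Pietro.Droniou:16} (noting that the argument carries over for $l\neq k$ upon replacing $\vproj[T]{k}\vPhi$ by $\vproj[T]{0}\vPhi$), whereas you spell out the identification of the weak limit by testing against $\vproj[T]{l}\vPhi$ and exploiting the interface cancellation of the face sum; your explicit argument is precisely the content of that citation, so the approaches coincide.
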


\begin{proof}
  \begin{asparaenum}[1)]
  \item \emph{Boundedness.} Let an element $T\in\Th$ be fixed, make $\mtau=\GT[l]\uv[T]$ in \eqref{eq:GT'} and use the Cauchy--Schwarz inequality followed by the discrete trace inequality \eqref{eq:trace} with $p=2$ to infer, for all $\uv[T]\in\UT$,
    $$
    \norm[L^2(T)^{d\times d}]{\GT[l]\uv[T]}\lesssim\norm[1,T]{\uv[T]}.
    $$
    Squaring the above inequality and summing over $T\in\Th$, \eqref{eq:Gh.cont} follows.
    
  \item \emph{Consistency.} Let $\vv\in H^{m}(\Omega)^d$.
    For all $T\in\Th$, plugging the definition of $\IT\vv$ into \eqref{eq:GT}, we get, for all $\mtau\in\Poly{l}(T)^{d\times d}$,
    \begin{equation}\label{eq:GT.consistency.basic}
      \int_T(\GT[l]\IT\vv-\GRAD\vv)\SSCAL\mtau = -\int_T(\vproj[T]{k}\vv-\vv)\SCAL(\DIV\mtau) + \sum_{F\in\Fh[T]}\int_F(\vproj[F]{k}\vv-\vv)\SCAL(\mtau\MVPROD\normal_{TF}).
    \end{equation}
    Recalling the definition \eqref{eq:lproj} of $\vproj[T]{k}$ and $\vproj[F]{k}$, we get from the previous expression that
    \begin{equation}\label{eq:GT.euler}
      \forall n\le k,\qquad    
      \int_T(\GT[l]\IT\vv-\GRAD\vv)\SSCAL\mtau = 0
      \qquad\forall\mtau\in\Poly{n}(T)^{d\times d},
    \end{equation}
    since $(\DIV\mtau)\in\Poly{n-1}(T)^d\subset\Poly{k}(T)^d$ and $\restrto{\mtau}{F}\normal_{TF}\in\Poly{n}(F)^d\subset\Poly{k}(F)^d$.
    If $l\le k$, this shows in particular that for all $T\in\Th$ it holds
    \begin{equation}\label{eq:GT.commuting}
      \GT[l]\IT\vv=\mproj[T]{l}\GRAD\vv,
    \end{equation}
    and \eqref{eq:Gh.approx} is an immediate consequence of the approximation properties \eqref{eq:lproj.approx} of the $L^2$-orthogonal projector.
    On the other hand, if $l>k$, making $\mtau=\GT[l]\IT\vv-\mproj{l}\GRAD\vv$ in \eqref{eq:GT.consistency.basic} and using the Cauchy--Schwarz, discrete inverse \eqref{eq:inverse} and trace \eqref{eq:trace} inequalities (both with $p=2$) to bound the right-hand side, we infer $\norm[L^2(T)^{d\times d}]{\GT[l]\IT\vv-\mproj{l}\GRAD\vv}\lesssim h^k\norm[H^{k+1}(T)^d]{\vv}$.
    Hence, using the triangle inequality, we arrive at
    $$
    \norm[L^2(T)^{d\times d}]{\GT[l]\IT\vv-\GRAD\vv}\lesssim h_T^k\norm[H^{k+1}(T)^d]{\vv},
    $$
    and \eqref{eq:Gh.approx} follows squaring and summing over $T\in\Th$.
    
  \item \emph{Sequential consistency.} The proof for $l=k$ in the scalar case is given in \cite[Proposition 5.6]{Di-Pietro.Droniou:16}. A close inspection shows that the arguments still stand when $l\neq k$ provided that we replace $\vproj[T]{k}\vPhi$ by $\vproj[T]{0}\vPhi$.
    \qedhere
  \end{asparaenum}
\end{proof}

\begin{remark}[Commuting property for $\Dh$]\label{rem:Dh.commuting}
  Combining \eqref{eq:GT.commuting} with~\eqref{eq:DT.trGT}, it is a simple matter to infer the following commuting property for $\Dh$: For all $\vv\in\U$,
  \begin{equation}\label{eq:Dh.commuting}
    \Dh\Ih\vv = \lproj{k}(\DIV\vv).
  \end{equation}
  This property is crucial to prove the inf-sup condition of Proposition \ref{prop:bh.properties} below using classical arguments in the analysis of saddle-point problems (cf., e.g., the reference textbook~\cite{Boffi.Brezzi.ea:13}).
\end{remark}

\subsection{Viscous term}

The viscous term is discretized by means of the bilinear form $a_h$ such that, for all $\uu[h],\uv[h]\in\Uh$,
\begin{equation}\label{eq:ah}
  a_h(\uu[h],\uv[h])
  \eqbydef\int_\Omega\Gh[k]\uu[h]\SSCAL\Gh[k]\uv[h] + s_h(\uu[h],\uv[h]),
\end{equation}
with stabilization bilinear form $s_h$ defined as follows:
$$
s_h(\uu[h],\uv[h])\eqbydef\sum_{T\in\Th}\sum_{F\in\Fh[T]}h_F^{-1}\int_F\dTF\uu[T]\SCAL\dTF\uv[T],
$$
where, for all $T\in\Th$ and all $F\in\Fh[T]$, we have introduced the face-based residual operator $\dTF:\UT\to\Poly{k}(F)^d$ such that
\begin{equation}\label{eq:dTF}
  \dTF\uv[T]\eqbydef\vproj[F]{k}\left(\vv_F - \pT\uv[T] - \vproj[T]{k}(\vv_T - \pT\uv[T])\right).
\end{equation}
This specific form of the penalized residual ensures the following consistency property (cf. \cite[Remark 6]{Di-Pietro.Ern.ea:14} for further insight):
For all $\vv\in H^{k+2}(\Omega)^d$,
\begin{equation}\label{eq:sh.consistency}
  s_h(\Ih\vv,\Ih\vv)^{\nicefrac12}\lesssim h^{k+2}\norm[H^{k+2}(\Omega)]{\vv}.
\end{equation}
The proof of the following result is postponed to Section \ref{sec:ah.properties}.
\begin{proposition}[Properties of $a_h$]\label{prop:ah.properties}
  The bilinear form $a_h$ has the following properties:
  \begin{enumerate}[1)]
  \item \emph{Stability and boundedness.} It holds, for all $\uv[h]\in\Uh$,
    \begin{equation}\label{eq:ah.stab.cont}
      C_a^{-1}\norm[1,h]{\uv[h]}^2\le\norm[a,h]{\uv[h]}^2\eqbydef a_h(\uv[h],\uv[h])\le C_a\norm[1,h]{\uv[h]}^2,
    \end{equation}
    with real number $C_a>0$ independent of $h$.
    Consequently, the map $\norm[a,h]{{\cdot}}$ defines a norm on $\UhD$ uniformly equivalent to $\norm[1,h]{{\cdot}}$.\label{item:ah.stab.cont}
  \item \emph{Consistency.} For all $\vv\in \U\cap H^{k+2}(\Omega)^d$, it holds
    \begin{equation}\label{eq:ah.consistency}
      \sup_{\uw[h]\in\UhD,\norm[1,h]{\uw[h]}=1} \left|
      \int_\Omega\LAPL\vv\SCAL\vw_h + a_h(\Ih\vv,\uw[h])
      \right|\lesssim h^{k+1}\norm[H^{k+2}(\Omega)^d]{\vv}.
    \end{equation} \label{item:ah.consistency}
  \item \emph{Sequential consistency.} Let $(\uv[h])_{h\in{\cal H}}$ denote a sequence in $(\UhD)_{h\in{\cal H}}$ bounded in the $\norm[1,h]{{\cdot}}$-norm with limit $\vv\in \U$ (cf. point 3) in Proposition \ref{prop:Gh.properties}).
    Then, it holds for all $\vPhi\in C_{\rm c}^\infty(\Omega)^d$
    $$
    a_h(\uv[h],\Ih\vPhi)\to a(\vv,\vPhi).
    $$\label{item:ah.seqconsistency}
  \end{enumerate}
\end{proposition}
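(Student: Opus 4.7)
The three items will be proved in order, each reducing to the properties of $\Gh[k]$ already stated in Proposition~\ref{prop:Gh.properties} together with estimates on the face residuals $\dTF$.

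For \emph{stability and boundedness} (item~1), the upper bound $\norm[a,h]{\uv[h]}\lesssim\norm[1,h]{\uv[h]}$ follows directly from the boundedness \eqref{eq:Gh.cont} of $\Gh[k]$ for the reconstruction contribution; for the penalty contribution I would bound each $\dTF\uv[T]$ in $L^2(F)$ using the $L^2$-stability of $\vproj[F]{k}$ and $\vproj[T]{k}$ together with the discrete trace inequality \eqref{eq:trace} applied to $\pT\uv[T]-\vv_T$, combined with the $\norm[1,T]{{\cdot}}$-stability of $\pT$ (obtained by testing \eqref{eq:pT} with $\vw=\pT\uv[T]$). For the lower bound I would follow the standard HHO recipe: first, picking $\mtau=\GRAD\vv_T$ in \eqref{eq:GT'} and using the discrete trace inequality, one shows that $\norm[T]{\GRAD\vv_T}$ is controlled by $\norm[T]{\GT[k]\uv[T]}+\seminorm[1,\partial T]{\uv[T]}$; second, one controls $\seminorm[1,\partial T]{\uv[T]}$ by $\norm[T]{\GT[k]\uv[T]}+(\sum_F h_F^{-1}\norm[F]{\dTF\uv[T]}^2)^{1/2}$ by unfolding the definition \eqref{eq:dTF} and again invoking boundedness of the projectors and trace inequalities.

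For \emph{consistency} (item~2), I would integrate $\LAPL\vv$ by parts element-wise; using that $\vw_F$ is single-valued on interfaces and vanishes on $\Fhb$ since $\uw[h]\in\UhD$, this yields
\begin{equation*}
-\int_\Omega\LAPL\vv\SCAL\vw_h
= \sum_{T\in\Th}\left(\int_T\GRAD\vv\SSCAL\GRAD\vw_T + \sum_{F\in\Fh[T]}\int_F(\GRAD\vv\MVPROD\normal_{TF})\SCAL(\vw_F-\vw_T)\right).
\end{equation*}
On the other hand, combining the commuting property \eqref{eq:GT.commuting} with \eqref{eq:GT'} tested against $\mtau=\mproj[T]{k}\GRAD\vv$ rewrites $\int_\Omega\Gh[k]\Ih\vv\SSCAL\Gh[k]\uw[h]$ in the same form, but with $\GRAD\vv$ replaced by $\mproj[T]{k}\GRAD\vv$ in the face integrals. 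The resulting difference is a sum of terms $\int_F\bigl((\GRAD\vv-\mproj[T]{k}\GRAD\vv)\MVPROD\normal_{TF}\bigr)\SCAL(\vw_F-\vw_T)$, which I bound by Cauchy--Schwarz (distributing $h_F^{\pm 1/2}$) against $\seminorm[1,\partial T]{\uw[T]}$, using the trace approximation estimate from \eqref{eq:lproj.approx}. The stabilization contribution $s_h(\Ih\vv,\uw[h])$ is handled by Cauchy--Schwarz and \eqref{eq:sh.consistency}. Summing these bounds gives the claimed $h^{k+1}\norm[H^{k+2}(\Omega)^d]{\vv}$ rate.

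For \emph{sequential consistency} (item~3), I split $a_h(\uv[h],\Ih\vPhi)$ into its two summands. The reconstruction part is the pairing of $\Gh[k]\uv[h]$, which converges weakly in $L^2$ to $\GRAD\vv$ by Proposition~\ref{prop:Gh.properties} item~3, with $\Gh[k]\Ih\vPhi$, which converges strongly in $L^2$ to $\GRAD\vPhi$ by \eqref{eq:Gh.approx} applied to the smooth $\vPhi$; hence this term tends to $\int_\Omega\GRAD\vv\SSCAL\GRAD\vPhi=a(\vv,\vPhi)$. The penalty part is bounded via Cauchy--Schwarz by $s_h(\uv[h],\uv[h])^{1/2}\,s_h(\Ih\vPhi,\Ih\vPhi)^{1/2}$; the first factor is $\lesssim\norm[1,h]{\uv[h]}$ by the already-proved upper bound on $\norm[a,h]{{\cdot}}$ and stays uniformly bounded, while the second is $\lesssim h^{k+2}\norm[H^{k+2}(\Omega)^d]{\vPhi}\to 0$ by \eqref{eq:sh.consistency}. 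The main obstacle is really the lower bound in item~1: the upper bound and the two consistency results follow mechanically once the norm equivalence $\norm[a,h]{{\cdot}}\sim\norm[1,h]{{\cdot}}$ is in hand, whereas this equivalence rests on a tight analysis of the residual operator $\dTF$ that does not reduce to the already-stated properties of $\Gh[k]$.
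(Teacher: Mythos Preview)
Your proposal is correct and follows essentially the same route as the paper. For item~2) in particular, your use of the commuting property \eqref{eq:GT.commuting} to write $\GT\IT\vv=\mproj[T]{k}\GRAD\vv$ and then invoke \eqref{eq:lproj.approx} for the face error is exactly what the paper does, only the paper phrases the volume cancellation via the Euler equation \eqref{eq:GT.euler} and the face bound via \eqref{eq:Gh.approx} (which, for $l=k$, \emph{is} the trace estimate for $\mproj[T]{k}\GRAD\vv$). For items~1) and~3) the paper simply defers to \cite{Di-Pietro.Droniou:16,Di-Pietro.Ern.ea:14}; your sketches spell out precisely the arguments contained in those references, so you are in fact supplying the details the paper omits. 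One minor point: in item~2), when you say the discrete consistency term has ``the same form'' as the integrated-by-parts expression, you are implicitly using that $\int_T(\GRAD\vv-\mproj[T]{k}\GRAD\vv)\SSCAL\GRAD\vw_T=0$ because $\GRAD\vw_T\in\Poly{k-1}(T)^{d\times d}$; it is worth making this explicit.
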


Some remarks are of order.
\begin{remark}[Alternative viscous bilinear form]\label{rem:ah}
  An alternative choice corresponding to the original HHO bilinear form of \cite{Di-Pietro.Ern.ea:14} is
  $$
  a_h(\uu[h],\uv[h])\eqbydef\int_\Omega\GRADh\ph\uu[h]\SSCAL\GRADh\ph\uv[h] + s_h(\uu[h],\uv[h]),
  $$
  where the difference with respect to~\eqref{eq:ah} lies in the fact that $\GRADh\ph$ replaces $\Gh$ in the consistency term.
  Properties 1)--2) in Proposition~\ref{prop:ah.properties} are straightforward consequences of \cite[Lemma~4 and Theorem~8]{Di-Pietro.Ern.ea:14}, respectively.
  Property 3), on the other hand, would require proving for $\GRADh\ph$ sequential consistency as in point 3) of Proposition~\ref{prop:Gh.properties}.
\end{remark}%

\begin{remark}[Variable kinematic viscosity]\label{rem:var.nu}
  A more general form for the viscous term in~\eqref{eq:strong:momentum} accomodating variable kinematic viscosities $\nu:\Omega\to\Real$ is
  $$
  -\DIV\matr{\sigma}(\vu),\qquad
  \matr{\sigma}(\vu) = 2\nu\GRADs\vec{u},
  $$
  where $\GRADs$ denotes the symmetric gradient operator.
  Our discretization can be modified to accomodate this case adapting the ideas developed in~\cite{Di-Pietro.Ern:15} in the framework of linear elasticity.
  Assume, for the sake of simplicity, that $\nu$ is piecewise constant on a partition of $\Omega$, and that for all $h\in{\cal H}$ the mesh $\Th$ is compliant with the partition (so that jumps of $\nu$ only occur at interfaces).
  For all $T\in\Th$, we define the discrete symmetric gradient operator $\GsT\eqbydef\frac12\big(\GT+(\GT)\trans\big)$ (with $\GT$ defined by \eqref{eq:GT}) and we use instead of~\eqref{eq:pT} the velocity reconstruction such that, for all $\uv[T]\in\UT$,
  \begin{subequations}\label{eq:pT.nu}
    \begin{equation}\label{eq:pT.nu:1}
      \int_T\GRADs\pT\uv[T]\SSCAL\GRADs\vw=
      -\int_T\vv_T\SCAL\DIV(\GRADs\vw) + \sum_{F\in\Fh[T]}\int_F\vv_F\SCAL(\GRADs\vw\MVPROD\normal_{TF})
      \qquad\forall\vw\in\Poly{k+1}(T)^d
    \end{equation}
  and
  \begin{equation}\label{eq:pT.nu:2}
  \int_T(\pT\uv[T]-\vv_T)=\vec{0},\qquad
  \int_T\GRADs\pT\uv[T]=\frac12\sum_{F\in\Fh[T]}\int_F\left(\normal_{TF}\otimes\vv_F-\vv_F\otimes\normal_{TF}\right).
  \end{equation}
  \end{subequations}
  Letting $\Gsh:\Uh\to\Poly{k+1}(\Th)^{d\times d}$ be such that, for all $\uv[h]\in\Uh$, $\restrto{(\Gsh\uv[h])}{T}=\GsT\uv[T]$, the viscous term in~\eqref{eq:discrete:momentum} below is discretized by means of the bilinear form
  $$
  a_{\nu,h}(\uu[h],\uv[h])\eqbydef\int_\Omega2\nu\Gsh\uu[h]:\Gsh\uv[h] + s_{\nu,h}(\uu[h],\uv[h]),
  $$
  with stabilization bilinear form
  $$
  s_{\nu,h}(\uu[h],\uv[h])\eqbydef\sum_{T\in\Th}\sum_{F\in\Fh[T]}\frac{2\nu_T}{h_F}\int_F\dTF\uu[T]\SCAL\dTF\uv[T],
  $$
  where $\nu_T\eqbydef\restrto{\nu}{T}\in\Poly{0}(T)$ and $\dTF$ is formally defined as in \eqref{eq:dTF} but using the velocity reconstruction operator defined by \eqref{eq:pT.nu}.
  In the analysis, the main difference with respect to constant kinematic viscosities is that the polynomial degree $k$ should be taken $\ge 1$ in order to ensure coercivity by a discrete Korn inequality (cf. in particular~\cite[Lemma~4]{Di-Pietro.Ern:15} for insight into this point).
\end{remark}

\subsection{Convective term}
For the discretization of the convective term, we consider here the following trilinear form $\Uh\times\Uh\times\Uh$ expressed in terms of the discrete gradient operator $\Gh[2k]$:
\begin{equation}\label{eq:th}
  t_h(\uw[h],\uu[h],\uv[h])
  \eqbydef\frac12\int_\Omega\vv_h\trans\Gh[2k]\uu[h]\MVPROD\vw_h - \frac12\int_\Omega\vu_h\trans\Gh[2k]\uv[h]\MVPROD\vw_h.
\end{equation}
This expression mimicks the continuous one given in~\eqref{eq:a.b.t} with $\Gh[2k]$ replacing the continuous gradient operator.
  Notice that, in the practical implementation, one does not need to actually compute $\Gh[2k]$ to evaluate $t_h$.
Instead, the following expression can be used, obtained by applying \eqref{eq:GT} twice to expand the terms involving $\Gh[2k]$:
$$
t_h(\uw[h],\uu[h],\uv[h])
= \sum_{T\in\Th} t_T(\uw[T],\uu[T],\uv[T]),
$$
where, for all $T\in\Th$,
\begin{equation}\label{eq:tT}
  \begin{aligned}
    t_T(\uw[T],\uu[T],\uv[T])
    &\eqbydef
    -\frac12\int_T\vu_T\trans\GRAD\vv_T\MVPROD\vw_T + \frac12\int_T\vv_T\trans\GRAD\vu_T\MVPROD\vw_T
    \\
    &\qquad
    + \frac12\sum_{F\in\Fh[T]}\int_F (\vu_F\SCAL\vv_T)(\vw_T\SCAL\normal_{TF})
    - \frac12\sum_{F\in\Fh[T]}\int_F (\vv_F\SCAL\vu_T)(\vw_T\SCAL\normal_{TF}).
  \end{aligned}
\end{equation}

The proof of the following result is postponed to Section \ref{sec:th.properties}.
\begin{proposition}[Properties of $t_h$]\label{prop:th.properties}
  The trilinear form $t_h$ has the following properties:
  \begin{enumerate}[1)]
  \item \emph{Skew-symmetry.} For all $\uv[h],\uw[h]\in\UhD$, it holds
    \begin{equation}\label{eq:th.ssymm}
      t_h(\uw[h],\uv[h],\uv[h])=0.
    \end{equation}
  \item \emph{Boundedness.} For all $\uu[h],\uv[h],\uw[h]\in\UhD$, it holds
    \begin{equation}\label{eq:th.cont}
      |t_h(\uw[h],\uu[h],\uv[h])|\le C_t\norm[1,h]{\uw[h]}\norm[1,h]{\uu[h]}\norm[1,h]{\uv[h]},
    \end{equation}
    with real number $C_t>0$ independent of $h$.
  \item \emph{Consistency.} For all $\vv\in\U\cap H^{k+2}(\Omega)^d$ such that $\DIV\vv = 0$, it holds
    \begin{equation}\label{eq:th.consistency}
      \sup_{\uw[h]\in\UhD,\norm[1,h]{\uw[h]}=1}\left|
      \int_\Omega\vw_h\trans\GRAD\vv~\vv
      - t_h(\Ih\vv,\Ih\vv,\uw[h])
      \right|
      \lesssim h^{k+1}\norm[H^2(\Omega)^d]{\vv}\norm[H^{k+2}(\Omega)^d]{\vv}.
    \end{equation}
  \item \emph{Sequential consistency.} Let $(\uv[h])_{h\in{\cal H}}$ denote a sequence in $(\UhD)_{h\in{\cal H}}$ bounded in the $\norm[1,h]{{\cdot}}$-norm with limit $\vv\in \U$ (cf. point 3) in Proposition \ref{prop:Gh.properties}). Then, for all $\vPhi\in C_{\rm c}^\infty(\Omega)^d$ it holds
    \begin{equation}\label{eq:th.bis}
      t_h(\uv[h],\uv[h],\Ih\vPhi)\to t(\vv,\vv,\vPhi).
    \end{equation}
  \end{enumerate}
\end{proposition}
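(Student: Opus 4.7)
The skew-symmetry~\eqref{eq:th.ssymm} is immediate from the definition~\eqref{eq:th}: setting $\uu[h]=\uv[h]$, the two volumetric integrals on the right-hand side coincide and cancel.

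For the boundedness~\eqref{eq:th.cont}, I work directly from~\eqref{eq:th}. H\"older's inequality with exponents $\tfrac14+\tfrac12+\tfrac14=1$ bounds each of the two integrals by a product of two $L^4$-norms of broken polynomials and the $L^2$-norm of a discrete gradient. The former are controlled by $\norm[1,h]{{\cdot}}$ via the discrete Sobolev embedding~\eqref{eq:sobolev} with $p=4$ (admissible in $d\in\{2,3\}$), the latter via the boundedness~\eqref{eq:Gh.cont} of $\Gh[2k]$, producing~\eqref{eq:th.cont} with an $h$-independent constant.

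The consistency~\eqref{eq:th.consistency} is the main step. Integration by parts inside each element combined with the constraint $\DIV\vv=0$ recasts the target quantity in the element-wise symmetric form
\[
\int_\Omega\vw_h\trans\GRAD\vv\,\vv = \sum_{T\in\Th}\left(\tfrac12\int_T\vw_T\trans\GRAD\vv\,\vv - \tfrac12\int_T\vv\trans\GRAD\vw_T\,\vv + \tfrac12\sum_{F\in\Fh[T]}\int_F(\vw_T\SCAL\vv)(\vv\SCAL\normal_{TF})\right).
\]
Expanding $t_h(\Ih\vv,\Ih\vv,\uw[h])$ via~\eqref{eq:tT} with the shorthands $\widehat{\vv}_T\eqbydef\vproj[T]{k}\vv$ and $\widehat{\vv}_F\eqbydef\vproj[F]{k}\vv$ and subtracting, the discrepancy splits into (i) volumetric cross-terms typified by $\int_T\vw_T\trans(\GRAD\vv\,\vv-\GRAD\widehat{\vv}_T\,\widehat{\vv}_T)$ and its symmetric counterpart, and (ii) face cross-terms involving $\vv-\widehat{\vv}_T$ and $\vv-\widehat{\vv}_F$. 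The residual face contribution $\sum_{T\in\Th}\sum_{F\in\Fh[T]}\int_F(\vw_F\SCAL\widehat{\vv}_T)(\widehat{\vv}_T\SCAL\normal_{TF})$ coming from $t_h$ is reduced to approximation errors by subtracting its continuous analogue with $\vv$ in place of $\widehat{\vv}_T$, which globally vanishes owing to the single-valuedness of $\vw_F$ on interfaces (with opposite outward normals) and the condition $\vw_F=\vec{0}$ on boundary faces. Each resulting error is then bounded by a telescopic add/subtract argument combined with the $W^{s,p}$-approximation bound~\eqref{eq:lproj.approx} at $l=k$, $s=k+1$, the trace inequalities~\eqref{eq:trace.cont}--\eqref{eq:trace}, H\"older's inequality, the discrete Sobolev embedding~\eqref{eq:sobolev}, and the continuous embedding $H^2(\Omega)\hookrightarrow L^\infty(\Omega)$ (valid in $d\in\{2,3\}$) to control $\norm[L^\infty(\Omega)]{\vv}$ by $\norm[H^2(\Omega)^d]{\vv}$. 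A final Cauchy--Schwarz in $T\in\Th$ then produces the advertised bound $h^{k+1}\norm[H^2(\Omega)^d]{\vv}\norm[H^{k+2}(\Omega)^d]{\vv}$. I expect this step to be the main obstacle: no individual estimate is deep, but the combinatorial bookkeeping of cross-terms is lengthy.

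For the sequential consistency~\eqref{eq:th.bis}, I revert to the $\Gh[2k]$ form of $t_h$ and pass to the limit term by term in $t_h(\uv[h],\uv[h],\Ih\vPhi)$. Proposition~\ref{prop:Gh.properties} supplies strong convergence $\vv_h\to\vv$ in $L^4(\Omega)^d$ (since $p=4$ is admissible in $d\in\{2,3\}$) together with weak convergence $\Gh[2k]\uv[h]\rightharpoonup\GRAD\vv$ in $L^2(\Omega)^{d\times d}$; on the test side, $\vproj{k}\vPhi\to\vPhi$ uniformly on $\Omega$ by~\eqref{eq:lproj.approx} while $\Gh[2k]\Ih\vPhi\to\GRAD\vPhi$ strongly in $L^2$ by~\eqref{eq:Gh.approx}. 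Combining the strong convergence of the tensor products $(\vproj{k}\vPhi)\otimes\vv_h$ and $\vv_h\otimes\vv_h$ in $L^2(\Omega)^{d\times d}$ with the weak (resp.\ strong) convergence of $\Gh[2k]\uv[h]$ (resp.\ $\Gh[2k]\Ih\vPhi$), both integrals pass to the limit and recombine into $t(\vv,\vv,\vPhi)$, establishing~\eqref{eq:th.bis}.
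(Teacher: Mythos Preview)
Your proofs of skew-symmetry, boundedness, and sequential consistency (points 1, 2, 4) coincide with the paper's: the same H\"older splittings, the same use of~\eqref{eq:sobolev} with $p=4$ and~\eqref{eq:Gh.cont}, and the same weak/strong pairing for the limit.

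For the consistency estimate (point 3), however, your route via the elementwise expansion~\eqref{eq:tT} has a genuine gap. After your integration by parts and subtraction, one of the volumetric cross-terms you obtain is
\[
\frac12\int_T \vw_T\trans\bigl(\GRAD\vv-\GRAD\widehat{\vv}_T\bigr)\,\widehat{\vv}_T,
\qquad \widehat{\vv}_T=\vproj[T]{k}\vv.
\]
The approximation bound~\eqref{eq:lproj.approx} that you invoke, taken at $l=k$, $s=k+1$, $m=1$, controls $\norm[L^2(T)^{d\times d}]{\GRAD(\vv-\vproj[T]{k}\vv)}$ only by $h_T^{k}\seminorm[H^{k+1}(T)^d]{\vv}$, not $h_T^{k+1}$; no higher $s$ is available since $\Poly{k}(T)$ saturates at this rate in $H^1$. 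Your telescoping plus~\eqref{eq:lproj.approx} therefore yields $O(h^k)$ for this term, one order short of~\eqref{eq:th.consistency}. There is no orthogonality of $\GRAD(\vv-\vproj[T]{k}\vv)$ to polynomial tensors that would let you recover the missing factor directly.

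The paper circumvents this by staying with the $\Gh[2k]$ formulation~\eqref{eq:th} rather than unpacking to~\eqref{eq:tT}. The analogous term becomes $\frac12\int_\Omega\vw_h\trans(\GRAD\vv-\Gh[2k]\Ih\vv)\,\vv$, and here the Euler relation~\eqref{eq:GT.euler} is the key: since $(\GRAD\vv-\GT[2k]\IT\vv)\perp\Poly{k}(T)^{d\times d}$ and $\vw_T\otimes\vproj[T]{0}\vv\in\Poly{k}(T)^{d\times d}$, one may replace $\vv$ by $\vv-\vproj{0}\vv$ at no cost, gaining the missing factor $h$ from~\eqref{eq:lproj.approx} with $l=0$. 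This orthogonality, specific to the reconstructed gradient $\GT[2k]$ and absent for the plain gradient of the $L^2$-projection, is the crux you are missing. The paper also inserts $\vw_F$ (exploiting single-valuedness of $(\vv\SCAL\normal_F)\vv$ at interfaces and $\vw_F=\vec{0}$ on $\Fhb$) so that all face residuals are tested against $\vw_F-\vw_T$, which fits naturally into~$\norm[1,h]{\uw[h]}$.
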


Some remarks are of order.
  \begin{remark}[Design guidelines]
    The trilinear form $t_h$ appears in the analysis carried out in Section~\ref{sec:conv} only through its properties detailed in Proposition~\ref{prop:th.properties}, with the sole exception of {\bf Step 4} in the proof of Theorem~\ref{thm:conv.min.reg} (strong convergence of the pressure), which requires a more intimate use of its expression.
    Such properties can therefore be intended as design guidelines.
\end{remark}

\begin{remark}[Comparison with a HDG trilinear form]\label{rem:th.HDG}
  A trilinear form inspired by the recent HDG literature is
  \begin{equation}\label{eq:th.HDG}
    t_h^{\rm HDG}(\uw[h],\uu[h],\uv[h])\eqbydef \sum_{T\in\Th} t_T^{\rm HDG}(\uw,\uu,\uv),
  \end{equation}
  where, for all $T\in\Th$,
  $$
  \begin{aligned}
    t_T^{\rm HDG}(\uw,\uu,\uv)
    &\eqbydef
    - \frac12\int_T\vu_T\trans\GRAD\vv_T\MVPROD\vw_T + \frac12\int_T\vv_T\trans\GRAD\vu_T\MVPROD\vw_T
    \\
    &\qquad
    + \frac12\sum_{F\in\Fh[T]}\int_F (\vu_F\SCAL\vv_T)(\vw_F\SCAL\normal_{TF})
    - \frac12\sum_{F\in\Fh[T]}\int_F (\vv_F\SCAL\vu_T)(\vw_F\SCAL\normal_{TF})
    \\
    &\qquad      
    + \frac{\eta}{2}\sum_{F\in\Fh[T]}\int_F (\vu_F-\vu_T)\SCAL(\vv_F-\vv_T)|\vw_F\SCAL\normal_{TF}|.
  \end{aligned}
  $$
  This trilinear form has been recently proposed in \cite{Qiu.Shi:16} (cf. Definition~3.3 therein and also~\cite{Cesmelioglu.Cockburn.ea:15}), where a HDG method is considered with element-based DOFs that are polynomials of degree $(k+1)$ (recall that here we use polynomials of degree $k$, cf.~\eqref{eq:Uh}) and the viscous term is discretized as in \cite{Lehrenfeld:10,Oikawa:15} in order to improve the convergence rates to match the ones of HHO methods (cf. \cite{Cockburn.Di-Pietro.ea:16} for further details, in particular Remark~2.2).
  Comparing the above expression of $t^{\rm HDG}_T$ with \eqref{eq:tT}, we observe the following differences:
  \begin{inparaenum}[(i)]
  \item $\vw_F$ replaces $\vw_T$ in both terms in the second line;
  \item a nonnegative stabilization corresponding to the term in the third line is added, including an user-dependent parameter $\eta\ge 0$ (taken equal to 1 in \cite{Qiu.Shi:16}).
  \end{inparaenum}
  Our analysis can be adapted to this trilinear form.
  In particular, all the properties listed in Proposition~\ref{prop:th.properties} hold for $t_h^{\rm HDG}$ with $\eta=0$.
\end{remark}

\subsection{Pressure-velocity coupling}

The pressure-velocity coupling is realized by means of the bilinear form $b_h$ on $\Uh\times\Ph$ such that, for all $(\uv[h],q_h)\in\Uh\times\Ph$,
\begin{equation}\label{eq:bh}
  b_h(\uv[h],q_h)\eqbydef -\int_\Omega\Dh\uv[h]q_h.
\end{equation}
The proof of the following result is postponed to Section \ref{sec:bh.properties}.
\begin{proposition}[Properties of $b_h$]\label{prop:bh.properties}
  The bilinear form $b_h$ has the following properties:
  \begin{enumerate}[1)]
  \item \emph{Inf-sup stability.} For all $q_h\in\Ph$ (with $\Ph$ defined by~\eqref{eq:UhD.Ph}), it holds
    \begin{equation}\label{eq:bh.inf-sup}
      \norm[L^2(\Omega)]{q_h}\lesssim\sup_{\uv[h]\in\UhD,\norm[1,h]{\uv[h]}=1} b_h(\uv[h],q_h).
    \end{equation}\label{item:bh.inf-sup}
  \item \emph{Consistency.} For all $q\in H^{k+1}(\Omega)$ it holds
    \begin{equation}\label{eq:bh.consistency}
      \sup_{\uv[h]\in\UhD,\norm[1,h]{\uv[h]}=1}\left|
      \int_\Omega\GRAD q\SCAL\vv_h - b_h(\uv[h],\lproj{k}q)
      \right|\lesssim h^{k+1}\norm[H^{k+1}(\Omega)]{q}.
    \end{equation}\label{item:bh.consistency}
  \item \emph{Sequential consistency.} We have sequential consistency for $b_h$ in the following sense:
    \begin{itemize}
    \item Let $(q_h)_{h\in{\cal H}}$ denote a sequence in $(\Ph)_{h\in{\cal H}}$ bounded in the $L^2(\Omega)$-norm and weakly converging to $q\in P$. Then, for all $\vPhi\in C_c^\infty(\Omega)^d$ it holds
      \begin{equation}\label{eq:bh.seq.cons:1}
        b_h(\Ih\vPhi,q_h)\to b(\vPhi,q).
      \end{equation}
    \item Let $(\uv[h])_{h\in{\cal H}}$ denote a sequence in $(\UhD)_{h\in{\cal H}}$ bounded in the $\norm[1,h]{{\cdot}}$-norm with limit $\vv\in \U$ (cf. point 3)
    in Proposition \ref{prop:Gh.properties}). Then, for all $\varphi\in C_c^\infty(\Omega)$ it holds
      \begin{equation}\label{eq:bh.seq.cons:2}
        b_h(\uv[h],\lproj{k}\varphi)\to b(\vv,\varphi).
      \end{equation}
    \end{itemize}
    \label{item:bh.seq.cons}
  \end{enumerate}
\end{proposition}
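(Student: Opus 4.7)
My plan is to prove the three items of Proposition~\ref{prop:bh.properties} in sequence, relying on the commuting property \eqref{eq:Dh.commuting} stated in Remark~\ref{rem:Dh.commuting}, the boundedness \eqref{eq:Ih.cont} of the interpolator, and the polynomial approximation estimate \eqref{eq:lproj.approx}. For the inf-sup condition I invoke the surjectivity of the continuous divergence on $H_0^1(\Omega)^d$ onto $L^2_0(\Omega)$: given $q_h\in\Ph$, one constructs $\vv_{q_h}\in H_0^1(\Omega)^d$ with $\DIV\vv_{q_h}=q_h$ and $\norm[H^1(\Omega)^d]{\vv_{q_h}}\lesssim\norm[L^2(\Omega)]{q_h}$. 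Taking the test $-\Ih\vv_{q_h}\in\UhD$, the commuting property together with $q_h\in\Poly{k}(\Th)$ and the definition \eqref{eq:lproj} of the projector yield
\[
b_h(-\Ih\vv_{q_h},q_h)=\int_\Omega\Dh\Ih\vv_{q_h}\,q_h=\int_\Omega\lproj{k}(\DIV\vv_{q_h})\,q_h=\norm[L^2(\Omega)]{q_h}^2,
\]
while \eqref{eq:Ih.cont} gives $\norm[1,h]{\Ih\vv_{q_h}}\lesssim\norm[L^2(\Omega)]{q_h}$, delivering \eqref{eq:bh.inf-sup}.

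The heart of the argument is the consistency estimate. For $q\in H^{k+1}(\Omega)$ and $\uv[h]\in\UhD$, I would expand $b_h(\uv[h],\lproj{k}q)=-\sum_{T\in\Th}\int_T\DT\uv[T]\,\lproj[T]{k}q$ using \eqref{eq:DT} with test function $\lproj[T]{k}q\in\Poly{k}(T)$, then write $\int_\Omega\GRAD q\SCAL\vv_h$ element-wise and integrate by parts on each $T$. The volumetric residual $\int_T\DIV\vv_T\,(q-\lproj[T]{k}q)$ vanishes because $\DIV\vv_T\in\Poly{k-1}(T)\subset\Poly{k}(T)$ and $\lproj[T]{k}$ is $L^2$-orthogonal to that space. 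For the face contributions, I would then add the quantity $\sum_{T\in\Th}\sum_{F\in\Fh[T]}\int_F(\vv_F\SCAL\normal_{TF})\,q$, which sums to zero thanks to the single-valuedness of $q\in H^{k+1}(\Omega)\subset H^1(\Omega)$ across interfaces together with $\vv_F=\vec{0}$ on $\Fhb$, in order to reach the clean identity
\[
\int_\Omega\GRAD q\SCAL\vv_h-b_h(\uv[h],\lproj{k}q)=\sum_{T\in\Th}\sum_{F\in\Fh[T]}\int_F\bigl((\vv_T-\vv_F)\SCAL\normal_{TF}\bigr)(q-\lproj[T]{k}q).
\]
Cauchy--Schwarz, the local norm definition \eqref{eq:norm1T} to absorb $h_F^{-\nicefrac12}\norm[L^2(F)]{\vv_T-\vv_F}$ into $\norm[1,h]{\uv[h]}$, and \eqref{eq:lproj.approx} with $l=k$, $s=k+1$, $m=0$, $p=2$ to bound $h_F^{\nicefrac12}\norm[L^2(F)]{q-\lproj[T]{k}q}\lesssim h_T^{k+1}\norm[H^{k+1}(T)]{q}$ then yield \eqref{eq:bh.consistency}. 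Tracking the cancellations carefully to reach the compact identity is the main technical obstacle; once obtained, the bound is routine.

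Both sequential consistency results are corollaries of the tools just assembled. For \eqref{eq:bh.seq.cons:1}, the commuting property rewrites $b_h(\Ih\vPhi,q_h)=-\int_\Omega\lproj{k}(\DIV\vPhi)\,q_h$; since $\lproj{k}(\DIV\vPhi)\to\DIV\vPhi$ strongly in $L^2(\Omega)$ by \eqref{eq:lproj.approx} while $q_h\rightharpoonup q$ weakly in $L^2(\Omega)$, the strong/weak pairing passes to the limit and recovers $-\int_\Omega\DIV\vPhi\,q=b(\vPhi,q)$. For \eqref{eq:bh.seq.cons:2}, specializing the identity from the consistency step to $q=\varphi\in C_c^\infty(\Omega)$ gives
\[
b_h(\uv[h],\lproj{k}\varphi)=\int_\Omega\GRAD\varphi\SCAL\vv_h-\sum_{T\in\Th}\sum_{F\in\Fh[T]}\int_F\bigl((\vv_T-\vv_F)\SCAL\normal_{TF}\bigr)(\varphi-\lproj[T]{k}\varphi).
\]
The first term converges to $\int_\Omega\GRAD\varphi\SCAL\vv=b(\vv,\varphi)$ by the strong $L^2$-convergence $\vv_h\to\vv$ granted by point~3) of Proposition~\ref{prop:Gh.properties}, while the residual is $\mathcal{O}\bigl(h\,\norm[1,h]{\uv[h]}\,\norm[H^2(\Omega)]{\varphi}\bigr)\to 0$ since $(\uv[h])_{h\in{\cal H}}$ is bounded in the $\norm[1,h]{{\cdot}}$-norm.
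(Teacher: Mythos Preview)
Your proof is correct and follows the paper's approach closely for items 1) and 2): the inf-sup argument via the continuous right inverse of the divergence combined with the commuting property \eqref{eq:Dh.commuting} is identical, and your consistency identity matches the one the paper obtains by summing \eqref{eq:bh.consistency:1} and \eqref{eq:bh.consistency:2}. For \eqref{eq:bh.seq.cons:2} you take a slightly different path: the paper argues directly from $\Dh\uv[h]={\rm tr}(\Gh\uv[h])\rightharpoonup\DIV\vv$ weakly in $L^2(\Omega)$ (a corollary of point~3) in Proposition~\ref{prop:Gh.properties}) paired with the strong convergence $\lproj{k}\varphi\to\varphi$, whereas you reuse the consistency identity from item~2) together with the strong $L^2$-convergence of $\vv_h$. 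Both routes are short once the ingredients are in place; yours has the mild advantage of not invoking the weak gradient convergence, while the paper's treats both halves of item~3) uniformly via the relation $\Dh={\rm tr}(\Gh)$.
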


\subsection{Discrete problem}

The discrete problem reads: Find $(\uu[h],p_h)\in\UhD\times\Ph$ such that
\begin{subequations}\label{eq:discrete}
  \begin{alignat}{2}
    \label{eq:discrete:momentum}
    \nu a_h(\uu[h],\uv[h]) + t_h(\uu[h],\uu[h],\uv[h]) + b_h(\uv[h],p_h) &= \int_\Omega\vf\SCAL\vv_h &\qquad& \forall\uv[h]\in\UhD,
    \\
    \label{eq:discrete:mass}
    -b_h(\uu[h],q_h) &= 0 &\qquad& \forall q_h\in\Ph.
  \end{alignat}
\end{subequations}

\begin{remark}[Efficient implementation]\label{rem:static.cond}
  When solving the system of nonlinear algebraic equations corresponding to \eqref{eq:discrete} by a first-order (Newton-like) algorithm, all element-based velocity DOFs and all but one pressure DOF per element can be locally eliminated at each iteration by computing the corresponding Schur complement element-wise.
  As all the computations are local, this static condensation procedure is a trivially parallel task which can fully benefit from multi-thread and multi-processor architectures.
  For the details, we refer to \cite[Section 6.2]{Di-Pietro.Ern.ea:16}, where the Stokes problem is considered (the only variation here is that also the linearized convective term appears in the matrices therein denoted by $A_T$).
  As a result, after the elimination to boundary DOFs corresponding to Dirichlet boundary conditions, we end up solving at each iteration a linear system of size
  $$
  d\card{\Fhi}{k+d-1\choose d-1} + \card{\Th}.
  $$
\end{remark}


\section{Analysis of the method}\label{sec:conv}

In this section we study the existence and uniqueness of the solution to the HHO scheme~\eqref{eq:discrete}, prove convergence to the exact solution for general data, and derive convergence rates under a standard data smallness assumption.

\subsection{Existence and uniqueness}\label{sec:ex.uniq}

The existence of a solution to problem \eqref{eq:discrete} can be proved using the following topological degree lemma (cf., e.g., \cite{Deimling:85}), as originally proposed in \cite{Eymard.Gallouet.ea:98} 
in the context of finite volumes for nonlinear hyperbolic problems; see also \cite{Eymard.Herbin.ea:07,Di-Pietro.Ern:10} for the Navier--Stokes equations.

\begin{lemma}[Topological degree]\label{lem:top.degree}
Let $W$ be a finite-dimensional functional space equipped with a norm $\norm[W]{{\cdot}}$, and let the function $\Psi:W\times[0,1]\to W$ satisfy the following assumptions:
\begin{enumerate}[1)]
\item $\Psi$ is continuous;
\item There exists $\mu>0$ such that, for any $(w,\rho)\in W\times [0,1]$, $\Psi(w,\rho)=0$ implies $\norm[W]{w}\neq\mu$;
\item $\Psi(\cdot,0)$ is an affine function and the equation $\Psi(w,0)=0$ has a solution $w\in W$ such that $\norm[W]{w}<\mu$.
\end{enumerate}
Then, there exists $w\in V$ such that $\Psi(w,1)=0$ and $\norm[W]{w}<\mu$.
\end{lemma}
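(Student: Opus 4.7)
The plan is to invoke the finite-dimensional Brouwer degree on the open ball $B_\mu \eqbydef \{w \in W : \norm[W]{w} < \mu\}$ and to exploit its homotopy invariance along the parameter $\rho$. First I would check that, for each $\rho \in [0,1]$, the degree $d(\Psi(\cdot, \rho), B_\mu, 0)$ is well-defined: hypothesis~2 says precisely that $\Psi(w,\rho) \ne 0$ whenever $w \in \partial B_\mu$, so $0$ is an admissible value. Combined with the continuity of $\Psi$ on $W \times [0,1]$ from hypothesis~1, the homotopy invariance of the Brouwer degree then yields
\[
  d(\Psi(\cdot, 0), B_\mu, 0) = d(\Psi(\cdot, 1), B_\mu, 0).
\]

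The step I expect to be the main obstacle is computing the degree at $\rho = 0$ and proving it is nonzero. Write $\Psi(w,0) = Aw - b$ for some linear $A : W \to W$ and some $b \in W$; hypothesis~3 furnishes a $w_0 \in B_\mu$ with $A w_0 = b$. The crucial point is to argue that $A$ must in fact be invertible: if $A$ admitted a nonzero kernel vector $k$, then $w_0 + t k$ would solve $\Psi(\cdot,0) = 0$ for every $t \in \mathbb{R}$, and the continuous unbounded function $t \mapsto \norm[W]{w_0 + t k}$ would, by the intermediate value theorem, attain the value $\mu$ at some $t_\star$, producing a zero of $\Psi(\cdot,0)$ on $\partial B_\mu$ and contradicting hypothesis~2. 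Hence $\Psi(\cdot,0)$ is an affine bijection, and the standard formula for the degree of an affine isomorphism gives $d(\Psi(\cdot, 0), B_\mu, 0) = \operatorname{sgn}(\det A) = \pm 1$.

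Combining the two displayed identities, we conclude $d(\Psi(\cdot, 1), B_\mu, 0) \ne 0$, and the existence property of the Brouwer degree then produces a $w \in B_\mu$ with $\Psi(w,1) = 0$, automatically satisfying $\norm[W]{w} < \mu$. The whole argument is essentially a direct application of the three fundamental properties of the degree (well-definedness on admissible triples, homotopy invariance, and existence), with the only nontrivial work being the invertibility argument for $A$ in the middle paragraph.
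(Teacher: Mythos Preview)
Your argument is correct and is exactly the standard degree-theoretic proof one finds in the references the paper cites. Note, however, that the paper does not actually prove Lemma~\ref{lem:top.degree}: it merely states the result and refers the reader to \cite{Deimling:85} (and to \cite{Eymard.Gallouet.ea:98,Eymard.Herbin.ea:07,Di-Pietro.Ern:10} for its use in similar contexts), so there is no ``paper's own proof'' to compare against. Your write-up supplies precisely the missing details---well-definedness of the degree via hypothesis~2, homotopy invariance via hypothesis~1, and the computation $d(\Psi(\cdot,0),B_\mu,0)=\pm1$ via the invertibility of the linear part of the affine map $\Psi(\cdot,0)$, the latter established by the neat intermediate-value contradiction you give.
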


\begin{theorem}[Existence and a priori bounds]\label{thm:existence}
 There exists a solution  $(\uu[h],p_h)\in\UhD\times\Ph$ to \eqref{eq:discrete}, which satisfies the a priori bounds
  \begin{equation}\label{eq:a-priori}
    \norm[1,h]{\uu[h]}\le C_aC_s\nu^{-1}\norm[L^2(\Omega)^d]{\vf},\qquad
    \norm[L^2(\Omega)]{p_h}\le C\left(\norm[L^2(\Omega)^d]{\vf} + \nu^{-2}\norm[L^2(\Omega)^d]{\vf}^2\right),
  \end{equation}
  with $C_a$ and $C_s$ as in \eqref{eq:ah.stab.cont} and \eqref{eq:sobolev}, respectively, and $C>0$ real number independent of both $h$ and $\nu$.
\end{theorem}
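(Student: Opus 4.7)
The existence is obtained from the topological degree Lemma~\ref{lem:top.degree} applied to the finite-dimensional space $W \eqbydef \UhD \times \Ph$, endowed with the norm $\norm[W]{(\uv[h], q_h)}^2 \eqbydef \norm[1,h]{\uv[h]}^2 + \norm[L^2(\Omega)]{q_h}^2$. I introduce the homotopy $\Psi: W \times [0,1] \to W$ defined implicitly by the requirement that $\Psi(\uu[h], p_h, \rho) = 0$ in $W$ be equivalent to the parametrized saddle-point problem: for all $(\uv[h], q_h) \in \UhD \times \Ph$,
\begin{align*}
\nu a_h(\uu[h], \uv[h]) + \rho\, t_h(\uu[h], \uu[h], \uv[h]) + b_h(\uv[h], p_h) &= \rho \int_\Omega \vf \SCAL \vv_h, \\
-b_h(\uu[h], q_h) &= 0.
\end{align*}
Continuity of $\Psi$ is immediate since $a_h, b_h$ are bilinear, $t_h$ is trilinear, and everything depends on finitely many coefficients. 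At $\rho = 0$ the problem is linear and homogeneous: testing the first line with $\uu[h]$ gives $a_h(\uu[h], \uu[h]) = 0$ hence $\uu[h] = \vec{0}$ by the coercivity in \eqref{eq:ah.stab.cont}, after which the inf-sup condition \eqref{eq:bh.inf-sup} forces $p_h = 0$. Since $\Psi(\cdot, 0)$ is affine (in fact linear) and has the trivial solution, hypothesis 3) of Lemma~\ref{lem:top.degree} is verified for any $\mu > 0$.

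\textbf{Uniform a priori bounds.} For arbitrary $\rho \in [0,1]$ and any solution of the parametrized system, I test the first line with $\uv[h] = \uu[h]$ and the second with $q_h = p_h$: the skew-symmetry \eqref{eq:th.ssymm} kills the convective contribution and the mass equation kills $b_h(\uu[h], p_h)$, leaving $\nu\, \norm[a,h]{\uu[h]}^2 = \rho \int_\Omega \vf \SCAL \vu_h$. Combining Cauchy--Schwarz, the discrete Sobolev embedding \eqref{eq:sobolev}, and \eqref{eq:ah.stab.cont} produces the velocity estimate
$$
\norm[1,h]{\uu[h]} \le \rho\, C_a C_s \nu^{-1} \norm[L^2(\Omega)^d]{\vf} \le C_a C_s \nu^{-1} \norm[L^2(\Omega)^d]{\vf}.
$$
For the pressure, I rearrange the first line as $b_h(\uv[h], p_h) = \rho \int_\Omega \vf \SCAL \vv_h - \nu a_h(\uu[h], \uv[h]) - \rho\, t_h(\uu[h], \uu[h], \uv[h])$ and bound the three terms on the right by $C_s \norm[L^2(\Omega)^d]{\vf} \norm[1,h]{\uv[h]}$ (via \eqref{eq:sobolev}), $\nu C_a \norm[1,h]{\uu[h]} \norm[1,h]{\uv[h]}$ (a Cauchy--Schwarz on $a_h$ together with \eqref{eq:ah.stab.cont}), and $C_t \norm[1,h]{\uu[h]}^2 \norm[1,h]{\uv[h]}$ (by \eqref{eq:th.cont}), respectively. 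Dividing by $\norm[1,h]{\uv[h]}$, applying the inf-sup condition \eqref{eq:bh.inf-sup}, and injecting the velocity bound yields the pressure estimate in \eqref{eq:a-priori}.

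\textbf{Conclusion.} Choosing $\mu$ strictly greater than the sum of the two right-hand sides in \eqref{eq:a-priori}, the uniform bounds above guarantee that no $\rho \in [0,1]$ admits a solution with $W$-norm exactly equal to $\mu$, which is hypothesis 2) of Lemma~\ref{lem:top.degree}. The Lemma then provides a solution at $\rho = 1$, which is precisely a solution of \eqref{eq:discrete}, and the bounds \eqref{eq:a-priori} are exactly the ones just derived. I expect the only subtle point to be the placement of the parameter $\rho$: multiplying both the convective term and the source by $\rho$ reduces the $\rho=0$ problem to a homogeneous Stokes-type system trivially solved by $(\vec{0},0)$, which avoids having to solve any auxiliary linear problem and makes hypothesis 3) immediate; the a priori arguments then go through unchanged because skew-symmetry eliminates the convective term regardless of $\rho$, and the constants appearing in \eqref{eq:a-priori} do not deteriorate as $\rho$ varies in $[0,1]$.
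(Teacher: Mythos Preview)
Your proof is correct and follows the same overall strategy as the paper: apply the topological degree Lemma~\ref{lem:top.degree} on $\UhD\times\Ph$, with the a priori bounds coming from coercivity~\eqref{eq:ah.stab.cont}, skew-symmetry~\eqref{eq:th.ssymm}, the discrete Sobolev embedding~\eqref{eq:sobolev}, the boundedness~\eqref{eq:th.cont}, and the inf-sup condition~\eqref{eq:bh.inf-sup}.

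The one noteworthy difference is your choice of homotopy. The paper multiplies only the convective term by $\rho$, so that $\Psi(\cdot,0)=0$ is the \emph{inhomogeneous} discrete Stokes problem; verifying hypothesis~3) then requires invoking the well-posedness of the HHO Stokes scheme (coercivity plus inf-sup). You instead multiply both the convective term and the source by $\rho$, which makes $\Psi(\cdot,0)=0$ the \emph{homogeneous} linear system, whose only solution is $(\underline{\vec 0},0)$ by the same coercivity/inf-sup argument. Both routes use the same ingredients, but yours packages them slightly more economically: you get hypothesis~3) for free from the trivial solution and fold the ``Stokes well-posedness'' step into the uniqueness check at $\rho=0$. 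The only cosmetic point is that your map $\Psi$ is described ``implicitly''; to be fully rigorous you should say that $\Psi$ is obtained by Riesz representation of the residual against a fixed inner product on $\UhD\times\Ph$ (the paper does this explicitly with the product $(\cdot,\cdot)_{0,h}$), but in finite dimensions this is routine and does not affect the argument.
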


\begin{proof}
 We consider the finite-dimensional space $\Wh\eqbydef\UhD\times\Ph$ equipped with the norm 
 $$
 \norm[W,h]{(\uw[h],r_h)}\eqbydef\norm[1,h]{ \uw[h]}+ \nu^{-1}\norm[L^2(\Omega)]{r_h }
 $$
 and the function $\Psi:\Wh\times[0,1]\to\Wh$ such that, for given $(\uw[h],r_h)\in\Wh$ and $\rho\in[0,1]$, $(\uxi[h],\zeta_h)=\Psi((\uw[h],r_h),\rho)$ is defined as the unique solution of
 \begin{subequations}\label{eq:rho:discrete}
  \begin{alignat}{2}
    \label{eq:rho:discrete:momentum}
  (\uxi[h],\uv[h])_{0,h}&=  \nu a_h(\uw[h],\uv[h]) + \rho t_h(\uw[h],\uw[h],\uv[h]) + b_h(\uv[h],r_h) -\int_\Omega\vf\SCAL\vv_h &\qquad& \forall\uv[h]\in\UhD,
    \\ \label{eq:rho:discrete:mass}
   \int_\Omega\zeta_h q_h&= -b_h(\uw[h],q_h)  &\qquad& \forall q_h\in\Ph,
   \end{alignat}
 \end{subequations}
 where $(.,.)_{0,h}$ is the $L^2$-like scalar product on $\Wh$ defined by
 $$	
 (\uw[h],\uv[h])_{0,h}
 \eqbydef\int_\Omega\vw_h\SCAL\vv_h
 + \sum_{T\in\Th}\sum_{F\in\Fh[T]}h_F\int_F(\vw_F-\vw_T)\SCAL(\vv_F-\vv_T).
 $$
 We next check the assumptions of the topological degree lemma.
 
 \begin{enumerate}[1)]
  \item Since $\Wh $ is a finite-dimensional space, the bilinear forms $a_h$ and $b_h$, the trilinear form $t_h$, and the scalar products are continuous, and so is the case for the function $\Psi$.

  \item Let $(\uw[h],r_h)\in \Wh$ be such that $\Psi((\uw[h],r_h),\rho)=(\underline{\vec{0}},0)$ for some $\rho\in[0,1]$. We next show that
$$
\norm[W,h]{(\uw[h],r_h)}\le (C_aC_s + C)\nu^{-1}\norm[L^2(\Omega)^d]{\vf} + C\nu^{-3}\norm[L^2(\Omega)^d]{\vf}^2,
$$
and point 2) in Lemma \ref{lem:top.degree} is verified for $$\mu=(C_aC_s + C)\nu^{-1}\norm[L^2(\Omega)^d]{\vf} + C\nu^{-3}\norm[L^2(\Omega)^d]{\vf}^2 + \epsilon$$ with $\epsilon>0$.
Recalling the coercivity of $a_h$ expressed by the first inequality in \eqref{eq:ah.stab.cont}, 
making $\uv[h]=\uw[h]$ in \eqref{eq:rho:discrete:momentum} and observing that $t_h(\uw[h],\uw[h],\uw[h])=0$ owing to skew-symmetry \eqref{eq:th.ssymm}
and that $b_h(\uw[h],r_h)=0$ owing to \eqref{eq:rho:discrete:mass} with $q_h=r_h$, we have
  $$
  \nu C_a^{-1}\norm[1,h]{\uw[h]}^2\le \nu\norm[a,h]{\uw[h]}^2 = \int_\Omega\vf\SCAL\vw_h
  \le\norm[L^2(\Omega)^d]{\vf}\norm[L^2(\Omega)^d]{\vw_h}
  \le C_s\norm[L^2(\Omega)^d]{\vf}\norm[1,h]{\uw[h]},
 $$
  where we have used the discrete Poincar\'{e} inequality \eqref{eq:sobolev} with $p=2$ to conclude. The bound on $\uw[h]$ follows.
  To prove the bound on $r_h$, we proceed as follows:
  $$
  \begin{aligned}
    \norm[L^2(\Omega)]{r_h}
    &\lesssim\sup_{\uv[h]\in\UhD,\norm[1,h]{\uv[h]}=1} b_h(\uv[h],r_h)
    \\
    &=\sup_{\uv[h]\in\UhD,\norm[1,h]{\uv[h]}=1}\left(
    \int_\Omega\vf\SCAL\vv_h - \nu a_h(\uw[h],\uv[h]) - \rho t_h(\uw[h],\uw[h],\uv[h])    
    \right)
    \\
    &\lesssim\norm[L^2(\Omega)^d]{\vf} + \nu\norm[1,h]{\uw[h]} + \rho \norm[1,h]{\uw[h]}^2
    \lesssim\norm[L^2(\Omega)^d]{\vf} + \nu^{-2}\norm[L^2(\Omega)^d]{\vf}^2,
  \end{aligned}
  $$
  where we have used the inf-sup condition \eqref{eq:bh.inf-sup} on $b_h$ in the first line and~\eqref{eq:rho:discrete:momentum} to pass to the second line;
  the Cauchy--Schwarz and the discrete Poincar\'{e} inequalities together with the boundedness of $a_h$ and $t_h$ expressed by the second inequality in \eqref{eq:ah.stab.cont} and by \eqref{eq:th.cont}, respectively, are used to pass to the third line;
  the bound on the velocity and the fact that $\rho\le 1$ allow to conclude.

\item $\Psi(\cdot,0)$ is an affine function from $\Wh$ to $\Wh$. The fact that $\Psi(\cdot,0)$ is invertible corresponds to the well-posedness of the HHO scheme for the Stokes problem, and can therefore be proved using the arguments of \cite[Lemma 3]{Di-Pietro.Ern.ea:16} (which classically rely on the coercivity  of $a_h$ expressed by the first inequality in~\eqref{eq:ah.stab.cont} and the inf-sup condition~\eqref{eq:bh.inf-sup} for $b_h$).
  Additionally, the unique solution $(\uw[h],r_h)\in\Wh$ to the equation $\Psi((\uw[h],r_h),0)=0$ satisfies $\norm[W,h]{(\uw[h],r_h)}<\mu$ as a consequence of point 2).
 \end{enumerate}
 The existence of a solution to \eqref{eq:discrete} is an immediate consequence of Lemma \ref{lem:top.degree}.
 Observing that, if $(\uu[h],p_h)\in\Wh$ solves \eqref{eq:discrete}, then $\Psi((\uu[h],p_h),1)=(\underline{\vec{0}},0)$, the bounds~\eqref{eq:a-priori} follow from point 2) above.
\end{proof}

We next consider uniqueness, which can be classically proved under a data smallness condition. 
\begin{theorem}[Uniqueness of the discrete solution]\label{thm:uniqueness}
 Assume that the right-hand side verifies
 \begin{equation}\label{eq:small.data:uniqueness}
   \norm[L^2(\Omega)^d]{\vf}\le\frac{\nu^2}{2C_a^2C_tC_s}
 \end{equation} 
 with $C_a$, $C_t$ and $C_s$ as in \eqref{eq:ah.stab.cont}, \eqref{eq:th.cont}, \eqref{eq:sobolev} respectively.
 Then, the solution $(\uu[h],p_h)\in\UhD\times\Ph$ of \eqref{eq:discrete} is unique.
\end{theorem}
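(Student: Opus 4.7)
The plan is the classical Navier--Stokes uniqueness argument adapted to the HHO setting. Assume two discrete solutions $(\uu[h]^1,p_h^1)$ and $(\uu[h]^2,p_h^2)$ and let $\uu[h]\eqbydef\uu[h]^1-\uu[h]^2\in\UhD$ and $p_h\eqbydef p_h^1-p_h^2\in\Ph$. Subtracting the two instances of \eqref{eq:discrete}, I obtain for all $\uv[h]\in\UhD$ and all $q_h\in\Ph$
\begin{equation*}
\nu a_h(\uu[h],\uv[h])+\bigl[t_h(\uu[h]^1,\uu[h]^1,\uv[h])-t_h(\uu[h]^2,\uu[h]^2,\uv[h])\bigr]+b_h(\uv[h],p_h)=0,\qquad b_h(\uu[h],q_h)=0.
\end{equation*}
Exploiting the (tri)linearity of $t_h$ in its first two arguments, I rewrite the bracketed term as
\begin{equation*}
t_h(\uu[h],\uu[h]^1,\uv[h])+t_h(\uu[h]^2,\uu[h],\uv[h]).
\end{equation*}

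Next I test the momentum equation with $\uv[h]=\uu[h]$ and the mass equation with $q_h=p_h$. The coupling term cancels, $b_h(\uu[h],p_h)=0$, and the skew-symmetry property \eqref{eq:th.ssymm} kills the second convective contribution, $t_h(\uu[h]^2,\uu[h],\uu[h])=0$. Invoking the coercivity of $a_h$ from \eqref{eq:ah.stab.cont} and the trilinear boundedness \eqref{eq:th.cont}, this yields
\begin{equation*}
\nu C_a^{-1}\norm[1,h]{\uu[h]}^2\le \nu\norm[a,h]{\uu[h]}^2 = -t_h(\uu[h],\uu[h]^1,\uu[h])\le C_t\norm[1,h]{\uu[h]}^2\norm[1,h]{\uu[h]^1}.
\end{equation*}

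Assuming $\uu[h]\neq\underline{\vec 0}$, dividing by $\norm[1,h]{\uu[h]}^2$ and inserting the a priori bound $\norm[1,h]{\uu[h]^1}\le C_aC_s\nu^{-1}\norm[L^2(\Omega)^d]{\vf}$ from Theorem~\ref{thm:existence} gives $\nu^2\le C_a^2C_tC_s\norm[L^2(\Omega)^d]{\vf}$, which contradicts the smallness assumption \eqref{eq:small.data:uniqueness} (the factor $\tfrac12$ leaves strict room). Hence $\uu[h]=\underline{\vec 0}$, and the residual momentum equation reduces to $b_h(\uv[h],p_h)=0$ for all $\uv[h]\in\UhD$; the inf-sup condition \eqref{eq:bh.inf-sup} on $b_h$ then forces $p_h=0$.

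The argument is essentially routine once the right properties are in place; the only step requiring care is the algebraic manipulation of the trilinear difference, where it is critical that the "bad" term $t_h(\uu[h],\uu[h]^1,\uu[h])$ (which one cannot control by skew-symmetry because $\uu[h]^1$ sits in the middle) be the one that survives, so that its size is tamed by the a priori bound, while the "good" term $t_h(\uu[h]^2,\uu[h],\uu[h])$ vanishes identically by \eqref{eq:th.ssymm}. This is precisely where the design choice of a skew-symmetric $t_h$ pays off.
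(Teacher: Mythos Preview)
Your proof is correct and follows essentially the same approach as the paper. The only cosmetic difference is in the splitting of the trilinear difference: the paper writes $t_h(\uu[h]^1,\uu[h],\uv[h])+t_h(\uu[h],\uu[h]^2,\uv[h])$ (so the surviving term after skew-symmetry involves $\uu[h]^2$), whereas you write $t_h(\uu[h],\uu[h]^1,\uv[h])+t_h(\uu[h]^2,\uu[h],\uv[h])$ (so the surviving term involves $\uu[h]^1$); since both solutions satisfy the same a priori bound~\eqref{eq:a-priori}, the two splittings are interchangeable.
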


\begin{proof}
 Let $(\uu[1,h],p_{1,h})\in\UhD\times\Ph$ and $(\uu[2,h],p_{2,h})\in\UhD\times\Ph$ solve \eqref{eq:discrete}, and let 
 \[
 \text{$\uw[h]\eqbydef\uu[1,h]-\uu[2,h]$ and $r_h\eqbydef p_{1,h}-p_{2,h} $.}
 \]
 Taking the difference of the discrete momentum balance equation \eqref{eq:discrete:momentum} written for $(\uu[h],p_h)=(\uu[1,h],p_{1,h})$ and $(\uu[h],p_h)=(\uu[2,h],p_{2,h})$, we infer that it holds for all $\uv[h]\in\UhD$,
 \begin{equation}\label{eq:uniqueness:1}
  \nu a_h(\uw[h],\uv[h]) + t_h(\uu[1,h],\uw[h],\uv[h])+ t_h(\uw[h],\uu[2,h],\uv[h]) + b_h(\uv[h],r_h) =0.
 \end{equation}
 Making $\uv[h]=\uw[h]$ in the above equation, observing that $t_h(\uu[1,h],\uw[h],\uw[h])=0$ owing to skew-symmetry (cf. point 1) in Proposition~\ref{prop:th.properties}), that $b_h(\uw[h],r_h)=0$ (this is a consequence of the discrete mass balance equation \eqref{eq:discrete:mass} written for $\uu[1,h]$ and $\uu[2,h]$ with $q_h=r_h$), and using the boundedness \eqref{eq:ah.stab.cont} of $a_h$ and \eqref{eq:th.cont} of $t_h$, we obtain 
 \[
 \left(\nu C_a^{-1}-C_t  \norm[1,h]{\uu[2,h]}\right)\norm[1,h]{\uw[h]}^2\le0.
 \]
By the a priori bounds \eqref{eq:a-priori} and the assumption \eqref{eq:small.data:uniqueness} on $\vf$, the first factor in the left-hand side is $> 0$.
 As a result, we infer $\uw[h]=\underline{\vec{0}}$, thus proving uniqueness for the velocity.
 Plugging this result into \eqref{eq:uniqueness:1}, it is inferred that for all $\uv[h]\in\UhD$ it holds $b_h(\uv[h],r_h) =0$.
 The inf-sup stability (cf. point \ref{item:bh.inf-sup}) in Proposition \ref{prop:bh.properties}) then gives 
 \[
  \norm[L^2(\Omega)]{r_h }\lesssim\sup_{\uv[h]\in\UhD,\norm[1,h]{\uv[h]}=1} b_h(\uv[h],r_h)=0,
 \]
 which proves uniqueness for the pressure.
\end{proof}

\subsection{Convergence to minimal regularity solutions}

\begin{theorem}[Convergence to minimal regularity solutions]\label{thm:conv.min.reg}
  Let $(\Th)_{h\in{\cal H}}$ denote and admissible mesh sequence as in Section \ref{sec:mesh}, and let $((\uu[h],p_h))_{h\in{\cal H}}$ be such that, for all $h\in{\cal H}$, $(\uu[h],p_h)\in\UhD\times\Ph$ solves \eqref{eq:discrete}.
  Then, it holds up to a subsequence with $(\vu,p)\in\U\times P$ solution of the continuous problem \eqref{eq:weak},
  \begin{enumerate}[1)]
  \item $\vu_h\to\vu$ strongly in $L^p(\Omega)^d$ for all $p\in[1,+\infty)$ if $d=2$, $p\in\lbrack 1,6)$ if $d=3$;\label{item:convu}
  \item $\Gh\uu[h]\to\GRAD\vu$ strongly in $L^2(\Omega)^{d\times d}$;\label{item:convgradu}
  \item $s_h(\uu[h],\uu[h])\to 0$;\label{item:convsautu}
   \item $p_h\to p$ strongly in $L^2(\Omega)$.\label{item:convp}
  \end{enumerate}
  If, in addition, the solution to~\eqref{eq:weak} is unique (which is the case if the smallness condition detailed in \cite[Eq. (2.12), Chapter IV]{Girault.Raviart:86} holds for $\vf$), convergence extends to the whole sequence.
\end{theorem}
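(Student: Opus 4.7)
The plan is to follow the compactness strategy outlined in the introduction, exploiting the sequential consistency properties collected in Propositions~\ref{prop:Gh.properties}, \ref{prop:ah.properties}, \ref{prop:th.properties}, and~\ref{prop:bh.properties}.

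\textbf{Step 1: extraction of weak limits.} The a~priori bounds \eqref{eq:a-priori} show that $(\norm[1,h]{\uu[h]})_{h\in{\cal H}}$ and $(\norm[L^2(\Omega)]{p_h})_{h\in{\cal H}}$ are uniformly bounded. Applying point~3) of Proposition~\ref{prop:Gh.properties}, up to a subsequence there exists $\vu\in\U$ such that $\vu_h\to\vu$ strongly in $L^p(\Omega)^d$ (for the Sobolev exponents stated in the theorem) and $\Gh[k]\uu[h]\rightharpoonup\GRAD\vu$ weakly in $L^2(\Omega)^{d\times d}$; by Banach--Alaoglu, up to a further extraction, $p_h\rightharpoonup p$ weakly in $L^2(\Omega)$, with $p\in P$ since the zero-mean constraint passes to the limit. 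This delivers item~\ref{item:convu} (up to the later identification of $\vu$ with a solution of the continuous problem).

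\textbf{Step 2: identification of the limit as a weak solution.} I plug $\uv[h]=\Ih\vPhi$ (for an arbitrary $\vPhi\in C_c^\infty(\Omega)^d$) into \eqref{eq:discrete:momentum} and $q_h=\lproj{k}\varphi$ (for $\varphi\in C_c^\infty(\Omega)$) into \eqref{eq:discrete:mass}, then pass to the limit term by term. The viscous contribution converges to $\nu a(\vu,\vPhi)$ by point~3) of Proposition~\ref{prop:ah.properties}; the convective term converges to $t(\vu,\vu,\vPhi)$ by \eqref{eq:th.bis}; the coupling terms converge thanks to \eqref{eq:bh.seq.cons:1}--\eqref{eq:bh.seq.cons:2}; the right-hand side converges by the strong convergence of $\vu_h$. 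Density of $C_c^\infty(\Omega)^d$ in $\U$ and of $C_c^\infty(\Omega)$ in $P$ then shows that $(\vu,p)$ solves \eqref{eq:weak}.

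\textbf{Step 3: strong convergence of the gradient and of the stabilization seminorm.} Test \eqref{eq:discrete:momentum} with $\uv[h]=\uu[h]$ and \eqref{eq:discrete:mass} with $q_h=p_h$. The trilinear term vanishes by skew-symmetry \eqref{eq:th.ssymm} and the coupling term vanishes as well, leaving the discrete energy identity
\begin{equation*}
\nu\norm[L^2(\Omega)^{d\times d}]{\Gh[k]\uu[h]}^2 + \nu\, s_h(\uu[h],\uu[h]) = \nu\, a_h(\uu[h],\uu[h]) = \int_\Omega\vf\SCAL\vu_h.
\end{equation*}
By the strong $L^2$ convergence $\vu_h\to\vu$, the right-hand side tends to $\int_\Omega\vf\SCAL\vu=\nu\norm[L^2(\Omega)^{d\times d}]{\GRAD\vu}^2$, the last identity being the energy equality for the continuous problem (obtained testing \eqref{eq:weak:momentum} with $\vv=\vu$ and using skew-symmetry of $t$ together with $\DIV\vu=0$). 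Weak lower semicontinuity of the $L^2$-norm applied to $\Gh[k]\uu[h]\rightharpoonup\GRAD\vu$ and nonnegativity of $s_h(\uu[h],\uu[h])$ then force both terms on the left to converge to their expected limits, yielding $\norm[L^2(\Omega)^{d\times d}]{\Gh[k]\uu[h]}\to\norm[L^2(\Omega)^{d\times d}]{\GRAD\vu}$ (hence item~\ref{item:convgradu} by combining weak convergence with norm convergence in a Hilbert space) and item~\ref{item:convsautu}.

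\textbf{Step 4: strong convergence of the pressure.} This step is the main obstacle, because weak convergence of $p_h$ must be upgraded using the momentum equation. I would combine the inf-sup stability \eqref{eq:bh.inf-sup} applied to $p_h-\lproj{k}p\in\Ph$ with a reformulation of $b_h(\uv[h],p_h)$ obtained from \eqref{eq:discrete:momentum}, isolating
\begin{equation*}
b_h(\uv[h],p_h-\lproj{k}p) = \int_\Omega\vf\SCAL\vv_h - \nu\, a_h(\uu[h],\uv[h]) - t_h(\uu[h],\uu[h],\uv[h]) - b_h(\uv[h],\lproj{k}p).
\end{equation*}
Choosing a supremizing sequence $\uv[h]$ with $\norm[1,h]{\uv[h]}=1$, extracting a further weakly convergent subsequence (again via Proposition~\ref{prop:Gh.properties}), and passing to the limit term by term yields a limit which vanishes thanks to the continuous momentum equation~\eqref{eq:weak:momentum} tested with the limit of $\vv_h$. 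The delicate point here is controlling the nonlinear term $t_h(\uu[h],\uu[h],\uv[h])$: using the explicit expression \eqref{eq:tT} together with the strong convergence $\vu_h\to\vu$ in $L^p$ (Step~1) and $\Gh[2k]\uu[h]\rightharpoonup\GRAD\vu$ weakly in $L^2$, one can show convergence to $t(\vu,\vu,\vv)$ (this is where the comment following Proposition~\ref{prop:th.properties} about needing the explicit form of $t_h$ becomes relevant). Combining with the $L^2$ approximation $\lproj{k}p\to p$, item~\ref{item:convp} follows.

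\textbf{Full-sequence convergence.} If the continuous solution is unique, every subsequence of $((\uu[h],p_h))_{h\in{\cal H}}$ admits a further subsequence converging to $(\vu,p)$ in the senses above, so the whole sequence converges by a standard Urysohn-type argument.
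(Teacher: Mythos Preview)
Your Steps~1--3 coincide with the paper's argument almost verbatim (existence of limits via the a~priori bounds and Proposition~\ref{prop:Gh.properties}, identification of the limit by sequential consistency, and the energy argument yielding strong convergence of $\Gh[k]\uu[h]$ and $s_h(\uu[h],\uu[h])\to 0$). One cosmetic slip: in Step~2 the right-hand side $\int_\Omega\vf\SCAL\vproj{k}\vPhi$ converges because $\vproj{k}\vPhi\to\vPhi$ in $L^2(\Omega)^d$, not because of the strong convergence of $\vu_h$.

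Your Step~4 is genuinely different from the paper's. The paper does \emph{not} apply the inf-sup condition to $p_h-\lproj{k}p$; instead it takes, for each $h$, a continuous Bogovski\u{\i} lifting $\vv_{p_h}\in\U$ with $\DIV\vv_{p_h}=p_h$, sets $\uhv[p_h,h]\eqbydef\Ih\vv_{p_h}$, and uses the commuting property~\eqref{eq:Dh.commuting} to obtain directly $\norm[L^2(\Omega)]{p_h}^2=-b_h(\uhv[p_h,h],p_h)$, which is then rewritten via the discrete momentum equation. Passing to the limit (along a further subsequence for which $\uhv[p_h,h]$ converges) yields $\limsup\norm[L^2(\Omega)]{p_h}^2\le\norm[L^2(\Omega)]{p}^2$, and strong convergence follows from weak convergence plus convergence of norms. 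Your route---inf-sup applied to $p_h-\lproj{k}p$, choice of a near-supremizer $\uv[h]$, extraction of a convergent subsequence, and term-by-term passage to the limit using the continuous momentum equation---also works: the crucial convergence $t_h(\uu[h],\uu[h],\uv[h])\to t(\vu,\vu,\vv)$ follows exactly as in the paper's Step~4 by writing $t_h$ in terms of $\Gh[2k]$ and pairing the strong $L^4$ convergence of $\vu_h$ (and $\vv_h$) with the weak $L^2$ convergence of $\Gh[2k]\uu[h]$ and $\Gh[2k]\uv[h]$; the viscous term converges thanks to the strong convergence of $\Gh[k]\uu[h]$ and $s_h(\uu[h],\uu[h])\to 0$ from Step~3. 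The paper's approach is slightly cleaner in that it manufactures a \emph{specific} test sequence and lands on a norm identity, whereas your argument requires an implicit contradiction/Urysohn layer (the near-supremizer depends on $h$, so you only control the supremum along a further subsequence); but both lead to the conclusion along a subsequence, which is all the theorem claims.
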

\begin{proof}
  The proof proceeds in several steps. In {\bf Step 1} we prove the existence of a limit for the sequence of discrete solutions. In {\bf Step 2} we show that this limit is indeed a solution of the continuous problem \eqref{eq:weak}. In {\bf Step 3} we prove the strong convergence of the velocity gradient and of the jumps, and in {\bf Step 4} the strong convergence of the pressure.
  
  \begin{asparaenum}[\bf Step 1.]
  \item \emph{Existence of a limit.}
    Since, for all $h\in{\cal H}$, $(\uu[h],p_h)\in\UhD\times\Ph$ solves \eqref{eq:discrete}, we infer combining the a priori bound \eqref{eq:a-priori} and point 3) in Proposition \ref{prop:Gh.properties} that there exists $(\vu,p)\in U\times P$ such that
    \begin{enumerate}[(i)]
    \item $\vu_h\to\vu$ strongly in $L^p(\Omega)^d$ for all $p\in[1,+\infty)$ if $d=2$, $p\in\lbrack 1,6)$ if $d=3$;
    \item $\Gh[l]\uu[h]\rightharpoonup\GRAD\vu$ weakly in $L^2(\Omega)^{d\times d}$ for all $l\ge 0$;
     \item $p_h\rightharpoonup p$ weakly in $L^2(\Omega)$.
    \end{enumerate}
    \medskip
    
  \item \emph{Identification of the limit.} We next prove that $(\vu,p)\in\U\times P$ is a solution of \eqref{eq:weak}.
    Let $\vPhi\in C_{\rm c}^\infty(\Omega)^d$. We apply the sequential consistency of the viscous, convective and pressure terms 
    (respectively expressed by point \ref{item:ah.seqconsistency}) in Proposition \ref{prop:ah.properties}, point 4) in Proposition \ref{prop:th.properties} and point \ref{item:bh.seq.cons}) in Proposition \ref{prop:bh.properties}) to infer
    \[
    \nu a_h(\uu[h],\Ih\vPhi)+t_h(\uu[h],\uu[h],\Ih\vPhi)+b_h(\Ih\vPhi,p_h)
    \to\nu a(\vu,\vPhi)+t(\vu,\vu,\vPhi)+ b(\vPhi,p).
    \]
    Furthemore, we have $\vproj{k}\vPhi\to\vPhi$ strongly in $L^2(\Omega)^d$, which implies
    \[ 
    \int_\Omega\vf\SCAL \vproj[h]{k}\vPhi \to\int_\Omega\vf\SCAL \vPhi.
    \]
    Finally, point \ref{item:bh.seq.cons}) of Proposition \ref{prop:bh.properties} gives for all $\varphi\in C_c^\infty(\Omega)$
    \[
    b_h(\uu[h],\lproj{k}\varphi)\to b(\vu,\varphi).
    \]
    As a result, we can conclude by density that $(\vu,p)\in\U\times P$ is a solution of \eqref{eq:weak} and point \ref{item:convu}) is proved.
    \medskip
    
  \item \emph{Strong convergence of the velocity gradient and of the jumps.} 
    Making $\uv[h]=\uu[h]$ in \eqref{eq:discrete:momentum} and observing that $t_h(\uu[h],\uu[h],\uu[h])=0$ owing to skew-symmetry \eqref{eq:th.ssymm}
    and that $b_h(\uu[h],p_h)=0$ owing to \eqref{eq:discrete:mass} with $q_h=p_h$, we have
    $$
    \nu \norm[L^2(\Omega)^{d\times d}]{\Gh[k]\uu[h]}^2\le \nu a_h(\uu[h],\uu[h] ) = \int_\Omega\vf\SCAL\vu_h.
    $$
    Since $\vu_h $ converges to $\vu$ strongly in $L^2(\Omega)^d$ and $\vu$ is a solution of \eqref{eq:weak}, we have 
    \[
    \nu \limsup\norm[L^2(\Omega)^{d\times d}]{\Gh[k]\uu[h]}^2
    \le \limsup\int_\Omega\vf\SCAL\vu_h=\int_\Omega\vf\SCAL\vu=\nu \norm[L^2(\Omega)^{d\times d}]{\GRAD\vu}^2.
    \]
    This estimate combined with the weak convergence of $\Gh[k]\uu[h]$ to $\GRAD\vu$ implies the strong convergence of the velocity gradient $\Gh[k]\uu[h]$ to $\GRAD\vu$ in $L^2(\Omega)^{d\times d}$.
    On the other hand, we also obtain that $a_h(\uu[h],\uu[h] )$ converges to $\norm[L^2(\Omega)^{d\times d}]{\GRAD\vu}^2 $, and finally we get
    \begin{equation}\label{eq:convsh}
      s_h(\uu[h],\uu[h])=a_h(\uu[h],\uu[h])-\int_\Omega\Gh[k]\uu[h]\SSCAL\Gh[k]\uu[h] \to 0.
    \end{equation}
    This proves points \ref{item:convgradu}) and \ref{item:convsautu}).
    \medskip
    
  \item \emph{Strong convergence of the pressure.}
    Observing that $p_h\in P$, from the surjectivity of the continuous divergence operator from $\U$ to $P$ we infer the existence of $\vv_{p_h}\in\U$ such that 
    \begin{equation}\label{eq:vp}
      \text{$\DIV\vv_{p_h}=p_h$ and $\norm[H^1(\Omega)^d]{\vv_{p_h}}\lesssim\norm[L^2(\Omega)]{p_h}$.}
    \end{equation}
    We let, for all $h\in{\cal H}$, $\uhv[p_h,h]\eqbydef\Ih\vv_{p_h}$, and study the properties of the sequence $(\uhv[p_h,h])_{h\in{\cal H}}$.
    For all $h\in{\cal H}$, it holds
    \begin{equation}\label{eq:bnd.uhv.ph}
      \norm[1,h]{\uhv[p_h,h]}
      \lesssim\norm[H^1(\Omega)^d]{\vv_{p_h}}
      \lesssim\norm[L^2(\Omega)]{p_h}
      \lesssim\norm[L^2(\Omega)^d]{\vf} + \nu^{-2}\norm[L^2(\Omega)^d]{\vf}^2,
    \end{equation}
    where we have used the boundedness~\eqref{eq:Ih.cont} of $\Ih$ in the first inequality,~\eqref{eq:vp} in the second, and the a priori bound~\eqref{eq:a-priori} on the pressure to conclude.
    Then, by point 3) in Proposition~\ref{prop:Gh.properties}, there exists $\vv_p\in\U$ such that $\vhv_{p_h,h}\to\vv_p$ strongly in $L^p(\Omega)^d$ for all $p\in[1,4]$ and $\Gh[l]\uhv[p_h,h]\rightharpoonup\GRAD\vv_p$ weakly in $L^2(\Omega)^{d\times d}$ for all $l\ge 0$.
    Moreover, by uniqueness of the limit in the distribution sense, it holds that
    \begin{equation}\label{eq:div.vv_p=p}
      \DIV\vv_p = p
    \end{equation}
    Making $\uv[h]=\uhv[p_h,h]$ in the discrete momentum balance equation \eqref{eq:discrete:momentum} and recalling the commuting property \eqref{eq:Dh.commuting}, we have
    \begin{equation}\label{eq:egnormp}
      \norm[L^2(\Omega)]{p_h}^2
      = -b_h(\uhv[p_h,h] ,p_h)
      =   \nu a_h(\uu[h],\uhv[p_h,h]) + t_h(\uu[h],\uu[h],\uhv[p_h,h])-\int_\Omega\vf\SCAL\vhv_{p_h,h}.
    \end{equation}   
    We study the limit of the three terms on the right of~\eqref{eq:egnormp} using the convergence properties for the discrete solution proved in the previous points.
    Combining the strong converge of $\Gh[k]\uu[h]$ with the weak convergence of $\Gh[k]\uhv[p_h,h]$ gives
    \[
    \int_\Omega\Gh[k]\uu[h]\SSCAL\Gh[k]\uhv[p_h,h] \to  \int_\Omega\GRAD\vu\SSCAL\GRAD\vv_p.
    \]
    Moreover, the convergence~\eqref{eq:convsh} of the jumps of $\uu[h]$ and the uniform bound \eqref{eq:bnd.uhv.ph} imply
    \[
    s_h(\uu[h],\uhv[p_h,h])\le  s_h(\uu[h],\uu[h])^{\frac12}\times s_h(\uhv[p_h,h],\uhv[p_h,h])^{\frac12}\to 0,
    \]
    so that, in conclusion, we have for the viscous term
    \[
    a_h(\uu[h],\uhv[p_h,h])\to  a(\vu,\vv_p).
    \]
    Observing that the convergence properties of the sequences $(\uu[h])_{h\in{\cal H}}$ and $(\uhv[p_h,h])_{h\in{\cal H}}$ are respectively analogous to those of the sequences $(\Ih\vPhi)_{h\in{\cal H}}$ and $(\uv[h])_{h\in{\cal H}}$ in point 4) of Proposition~\ref{prop:th.properties}, we can prove proceeding in a similar way that
    \[
    t_h(\uu[h],\uu[h],\uhv[p_h,h]) \to t(\vu,\vu, \vv_p).
    \]    
    Finally, by strong convergence of $\vhv_{p_h,h}$ to $\vv_{p}$ in $L^2(\Omega)^d$, we readily infer for the source term
    \[
    \int_\Omega\vf\SCAL\vhv_{p_h,h} \to \int_\Omega\vf\SCAL \vv_p.     
    \]
    Collecting the above convergence results and using the momentum balance equation \eqref{eq:weak:momentum} together with~\eqref{eq:div.vv_p=p} leads to
    \[
    \limsup  \norm[L^2(\Omega)]{p_h}^2 \le \nu a(\vu,\vv_p)+t(\vu,\vu, \vv_p)-\int_\Omega\vf\SCAL \vv_p=-b(\vv_p,p)=\norm[L^2(\Omega)]{p}^2,
    \]
    and the strong convergence of the pressure in $L^2(\Omega)$ stated in point \ref{item:convp}) follows.\qedhere
  \end{asparaenum}
\end{proof}

\subsection{Convergence rates for small data}

\begin{theorem}[Convergence rates for small data]\label{thm:err.est}
  Let $(\vu,p)\in\U\times P$ and $(\uu[h],p_h)\in\UhD\times\Ph$ solve problems \eqref{eq:weak} and \eqref{eq:discrete}, respectively, and assume uniqueness (which holds, in particular, if both smallness conditions \cite[Eq. (2.12), Chapter IV]{Girault.Raviart:86} and~\eqref{eq:small.data:uniqueness} are verified).
  Assume, moreover, the additional regularity $(\vu,p)\in H^{k+2}(\Omega)^d\times H^{k+1}(\Omega)$, as well as
  \begin{equation}\label{eq:small.data}
    \norm[L^2(\Omega)^d]{\vf}\le\frac{\nu^2}{2C_IC_aC_t(1+C_{\rm P}^2)},
  \end{equation}
  with $C_I$, $C_a$ and $C_t$ as in \eqref{eq:Ih.cont}, \eqref{eq:ah.stab.cont} and \eqref{eq:th.cont}, respectively, and $C_{\rm P}$ Poincar\'{e} constant only depending on $\Omega$ such that, for all $\vv\in\U$, $\norm[L^2(\Omega)^d]{\vv}\le C_{\rm P}\norm[L^2(\Omega)^{d\times d}]{\GRAD\vv}$.
  Let
  $$
  \uhu[h]\eqbydef\Ih\vu,\qquad \hph\eqbydef\lproj{k} p.
  $$  
  Then, there is $C>0$ independent of both $h$ and $\nu$ such that
  \begin{equation}\label{eq:err.est}
    \norm[1,h]{\uu[h]-\uhu[h]}
    + \nu^{-1}\norm[L^2(\Omega)]{p_h-\hph}
    \le C h^{k+1} \big(
    \left(1+\nu^{-1}\norm[H^2(\Omega)^d]{\vu}\right)\norm[H^{k+2}(\Omega)^d]{\vu} + \nu^{-1}\norm[H^{k+1}(\Omega)]{p}
    \big).
  \end{equation}
\end{theorem}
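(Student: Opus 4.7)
The plan is to set up a standard error analysis around the two quantities $\ue[h]\eqbydef\uu[h]-\uhu[h]\in\UhD$ and $\eps[h]\eqbydef p_h-\hph\in\Ph$, the zero-mean property of $\hph$ being inherited from that of $p$ through the $L^2$-projector. First I would derive error equations by subtracting $\nu a_h(\uhu[h],\uv[h])+t_h(\uhu[h],\uhu[h],\uv[h])+b_h(\uv[h],\hph)$ from~\eqref{eq:discrete:momentum} and substituting the strong form~\eqref{eq:strong:momentum} for the source. The three consistency estimates~\eqref{eq:ah.consistency},~\eqref{eq:th.consistency} and~\eqref{eq:bh.consistency} then collapse the right-hand side into a single functional $\mathcal{E}_h$ with
$$
|\mathcal{E}_h(\uv[h])|\lesssim h^{k+1}\bigl(\nu\norm[H^{k+2}(\Omega)^d]{\vu}+\norm[H^2(\Omega)^d]{\vu}\norm[H^{k+2}(\Omega)^d]{\vu}+\norm[H^{k+1}(\Omega)]{p}\bigr)\norm[1,h]{\uv[h]},
$$
while the commuting property~\eqref{eq:Dh.commuting} applied to $\DIV\vu=0$ yields $b_h(\ue[h],q_h)=0$ for every $q_h\in\Ph$.

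To bound the velocity error I would test the momentum error equation with $\uv[h]=\ue[h]$. The pressure term vanishes, and by trilinearity together with skew-symmetry~\eqref{eq:th.ssymm} the convective difference reduces to $t_h(\ue[h],\uhu[h],\ue[h])$. Combined with the coercivity half of~\eqref{eq:ah.stab.cont} and the boundedness~\eqref{eq:th.cont}, this yields
$$
\bigl(\nu C_a^{-1}-C_t\norm[1,h]{\uhu[h]}\bigr)\norm[1,h]{\ue[h]}^2\le\mathcal{E}_h(\ue[h]).
$$
From~\eqref{eq:Ih.cont} one has $\norm[1,h]{\uhu[h]}\le C_I\norm[H^1(\Omega)^d]{\vu}$, and testing~\eqref{eq:weak:momentum} with $\vv=\vu$ and applying Poincaré yields the continuous bound $\norm[H^1(\Omega)^d]{\vu}\le(1+C_{\rm P}^2)\nu^{-1}\norm[L^2(\Omega)^d]{\vf}$. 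The hypothesis~\eqref{eq:small.data} is precisely calibrated so that $C_t\norm[1,h]{\uhu[h]}\le\nu/(2C_a)$, whence the bracket is at least $\nu/(2C_a)$. Dividing by $\norm[1,h]{\ue[h]}$ and invoking the Step~1 bound on $\mathcal{E}_h$ produces the desired $h^{k+1}$ velocity estimate.

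For the pressure I would rely on the inf-sup inequality~\eqref{eq:bh.inf-sup}: solving the momentum error equation for $b_h(\uv[h],\eps[h])$ and using the splitting $t_h(\uu[h],\uu[h],\uv[h])-t_h(\uhu[h],\uhu[h],\uv[h])=t_h(\ue[h],\uhu[h],\uv[h])+t_h(\uu[h],\ue[h],\uv[h])$,
$$
\norm[L^2(\Omega)]{\eps[h]}\lesssim\sup_{\uv[h]\in\UhD,\,\norm[1,h]{\uv[h]}=1}\bigl|\mathcal{E}_h(\uv[h])-\nu a_h(\ue[h],\uv[h])-t_h(\ue[h],\uhu[h],\uv[h])-t_h(\uu[h],\ue[h],\uv[h])\bigr|.
$$
Each term is then bounded using the boundedness of $a_h$ from~\eqref{eq:ah.stab.cont}, the boundedness~\eqref{eq:th.cont}, the a priori estimate~\eqref{eq:a-priori} on $\norm[1,h]{\uu[h]}$, and the velocity bound just derived. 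Dividing by $\nu$ closes the pressure contribution and delivers~\eqref{eq:err.est}.

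The hard part will be the absorption argument in the velocity step: one must propagate the continuous Poincaré-type bound on $\norm[H^1(\Omega)^d]{\vu}$ through the interpolator and the boundedness constant of $t_h$ sharply enough that the exact form~\eqref{eq:small.data} of the data-smallness condition leaves a strictly positive residual coercivity coefficient. Picking the decomposition $t_h(\ue[h],\uhu[h],\ue[h])$ rather than $t_h(\ue[h],\uu[h],\ue[h])$ is essential here, since only the former can be bounded in terms of continuous quantities alone, independently of the discrete a priori estimate on $\uu[h]$. Once this absorption is secured, the rest is bookkeeping with the consistency estimates and the inf-sup condition.
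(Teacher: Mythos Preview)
Your proposal is correct and follows essentially the same route as the paper's proof: the same error equation and consistency functional, the same skew-symmetry reduction to $t_h(\ue[h],\uhu[h],\ue[h])$ in the velocity step, the same absorption via~\eqref{eq:small.data} and the continuous a priori bound on $\vu$, and the same inf-sup argument for the pressure. The only cosmetic differences are that the paper phrases the continuous a priori bound with $\norm[H^{-1}(\Omega)^d]{\vf}$ rather than $\norm[L^2(\Omega)^d]{\vf}$ and, in the pressure step, inserts $\pm t_h(\uhu[h],\uu[h],\uv[h])$ to obtain the alternative (equivalent) splitting $t_h(\ue[h],\uu[h],\uv[h])+t_h(\uhu[h],\ue[h],\uv[h])$.
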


\begin{corollary}[Convergence rates for small data]
  Under the above assumptions, it holds
  \begin{multline*}
    \norm[L^2(\Omega)^{d\times d}]{\Gh\uu[h]-\GRAD\vu} + 
    s_h(\uu[h],\uu[h])^{\nicefrac12} + \nu^{-1}\norm[L^2(\Omega)]{p_h-p}
    \lesssim
    \\
    h^{k+1} \big(
    \left(1+\nu^{-1}\norm[H^2(\Omega)^d]{\vu}\right)\norm[H^{k+2}(\Omega)^d]{\vu} + \nu^{-1}\norm[H^{k+1}(\Omega)]{p}
    \big),
  \end{multline*}
  where the second term in the left-hand side accounts for the jumps of the discrete solution.
\end{corollary}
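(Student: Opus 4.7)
The plan is to deduce the corollary from Theorem~\ref{thm:err.est} by three triangle inequality splits, one for each term in the left-hand side, inserting between the discrete solution and the exact solution the interpolant $\uhu[h] = \Ih\vu$ (respectively $\hph = \lproj{k}p$). Each resulting pair consists of a purely discrete error, controlled by Theorem~\ref{thm:err.est}, and a consistency/projection error, controlled by the approximation properties already established in the excerpt.

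For the gradient term, I would write
\[
\norm[L^2(\Omega)^{d\times d}]{\Gh\uu[h]-\GRAD\vu}
\le \norm[L^2(\Omega)^{d\times d}]{\Gh(\uu[h]-\uhu[h])}
+ \norm[L^2(\Omega)^{d\times d}]{\Gh\Ih\vu - \GRAD\vu}.
\]
The first summand is bounded by $\norm[1,h]{\uu[h]-\uhu[h]}$ using the boundedness of $\Gh$ stated in \eqref{eq:Gh.cont}, hence by the right-hand side of \eqref{eq:err.est}. The second summand is bounded by $h^{k+1}\norm[H^{k+2}(\Omega)^d]{\vu}$ using the consistency estimate \eqref{eq:Gh.approx} with $l=k$ (so $m=k+2$).

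For the stabilization term, I would use that $s_h$ is a positive semidefinite bilinear form appearing as a component of $a_h$, so a Cauchy--Schwarz-in-$s_h$ split gives
\[
s_h(\uu[h],\uu[h])^{\nicefrac12}
\le s_h(\uu[h]-\uhu[h],\uu[h]-\uhu[h])^{\nicefrac12}
+ s_h(\uhu[h],\uhu[h])^{\nicefrac12}.
\]
The first term is controlled by $\norm[a,h]{\uu[h]-\uhu[h]}$ and hence by $\norm[1,h]{\uu[h]-\uhu[h]}$ thanks to the equivalence~\eqref{eq:ah.stab.cont}; the second is $\lesssim h^{k+2}\norm[H^{k+2}(\Omega)^d]{\vu}$ by \eqref{eq:sh.consistency}, which is in fact a superconvergent contribution and therefore absorbed into the $h^{k+1}$ bound.

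For the pressure, a similar split $\norm[L^2(\Omega)]{p_h-p}\le\norm[L^2(\Omega)]{p_h-\hph}+\norm[L^2(\Omega)]{\hph-p}$ reduces the problem to Theorem~\ref{thm:err.est} (scaled by $\nu^{-1}$) plus the $L^2$-projection approximation estimate \eqref{eq:lproj.approx} with $s=k+1$, $m=0$, $p=2$, yielding $\norm[L^2(\Omega)]{\hph-p}\lesssim h^{k+1}\norm[H^{k+1}(\Omega)]{p}$; multiplying by $\nu^{-1}$ fits under the right-hand side of \eqref{eq:err.est}. Summing the three bounds and tracking the $\nu$-dependence carefully (so that every term is either already present in the right-hand side of \eqref{eq:err.est} or is a strictly better power of $h$) yields the claim. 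There is no real obstacle: this is a packaging of Theorem~\ref{thm:err.est} with the $\Gh$/$s_h$/$L^2$-projection approximation bounds; the only point requiring mild attention is to make sure the $(1+\nu^{-1}\norm[H^2]{\vu})\norm[H^{k+2}]{\vu}$ factor on the exact-solution side is large enough to absorb the $\nu$-independent projection contributions $h^{k+1}\norm[H^{k+2}(\Omega)^d]{\vu}$ (which is immediate since the coefficient is $\ge 1$).
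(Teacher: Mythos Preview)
Your proposal is correct and essentially identical to the paper's own proof: both insert the interpolants $\uhu[h]$, $\hph$ via the triangle inequality, bound the interpolation pieces using \eqref{eq:Gh.approx}, \eqref{eq:sh.consistency}, and \eqref{eq:lproj.approx}, and bound the discrete error pieces by $\norm[a,h]{\uu[h]-\uhu[h]}$ (hence $\norm[1,h]{\uu[h]-\uhu[h]}$ via \eqref{eq:ah.stab.cont}) together with \eqref{eq:err.est}. The only cosmetic difference is that the paper packages the gradient and stabilization discrete errors together as $\norm[a,h]{\uu[h]-\uhu[h]}$ in one step, whereas you handle them separately using \eqref{eq:Gh.cont} and the definition of $\norm[a,h]{{\cdot}}$.
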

\begin{proof}
  Using the triangle inequality, we infer
  \begin{multline*}
  \norm[L^2(\Omega)^{d\times d}]{\Gh\uu[h]-\GRAD\vu} + s_h(\uu[h],\uu[h])^{\nicefrac12} + \nu^{-1}\norm[L^2(\Omega)]{p_h-p}
  \le
  \\
  \norm[L^2(\Omega)^{d\times d}]{\GRAD\vu-\Gh\uhu[h]} + s_h(\uhu[h],\uhu[h])^{\nicefrac12} + \nu^{-1}\norm[L^2(\Omega)]{p-\hph}
  \\
  + \norm[a,h]{\uu[h]-\uhu[h]} + \nu^{-1}\norm[L^2(\Omega)]{p_h-\hph}.
  \end{multline*}
  The terms in the second line can be estimated recalling the consistency property~\eqref{eq:Gh.approx} for the gradient reconstruction and using the approximation properties \eqref{eq:lproj.approx} of the $L^2$-orthogonal projector and the consistency properties~\eqref{eq:sh.consistency} of $s_h$.
  For the terms in the third line, recall the norm equivalence~\eqref{eq:ah.stab.cont} and use~\eqref{eq:err.est}.
\end{proof}

\begin{remark}[Extension to other hybrid discretizations]
  The following proof  extends without modifications to any bilinear forms $a_h$ and $b_h$ and trilinear form $t_h$ that match, respectively, the properties 1) and 2) in Proposition~\ref{prop:ah.properties}, 1)--3) in Proposition~\ref{prop:bh.properties}, and 1) and 2) in Proposition~\ref{prop:th.properties}, respectively.
  Such properties can therefore be intended as design guidelines.
\end{remark}

\begin{proof}[Proof of Theorem~\ref{thm:err.est}]
  Let, for the sake of brevity, $(\ue,\eps)\eqbydef(\uu[h]-\uhu[h],p_h-\hph)$.
  The proof proceeds in three steps: in {\bf Step 1}, we identify the consistency error and derive a lower bound in terms of $\norm[1,h]{\ue[h]}$ using the data smallness assumption,
  in {\bf Step 2} we estimate the error on the velocity and in {\bf Step 3} the error on the pressure.

  \begin{asparaenum}[\bf Step 1.]
  \item \emph{Consistency error and lower bound.}
    It is readily inferred from the discrete momentum balance equation \eqref{eq:discrete:momentum} that it holds, for all $\uv[h]\in\UhD$,
    \begin{equation}\label{eq:err.eq:momentum}
      \nu a_h(\ue,\uv[h]) + t_h(\uu[h],\uu[h],\uv[h]) - t_h(\uhu[h],\uhu[h],\uv[h]) + b_h(\uv[h],\eps) = \Eh(\uv[h]),
    \end{equation}
    with consistency error
    $$
    \Eh(\uv[h])\eqbydef (\vf,\vv_h) - a_h(\uhu[h],\uv[h]) - t_h(\uhu[h],\uhu[h],\uv[h]) - b_h(\uv[h],\hph).
    $$
    Making $\uv[h]=\ue$ in \eqref{eq:err.eq:momentum}, and observing that $t_h(\uu[h],\uu[h],\ue)=t_h(\uu[h],\uhu[h],\ue)$ owing to the skew-symmetry property \eqref{eq:th.ssymm}, and that $b_h(\ue,\eps)=b_h(\uu[h],\eps)-b_h(\uhu[h],\eps)=0$ owing to \eqref{eq:discrete:mass} and since $\Dh\uhu[h]=\lproj{k}(\DIV\vu)=0$ (cf. \eqref{eq:Dh.commuting} and \eqref{eq:strong:mass}), we infer
    \begin{equation}\label{eq:lower.bnd.Eh}
      \begin{aligned}
        \Eh(\ue) &=\nu\norm[a,h]{\ue}^2 + t_h(\ue,\uhu[h],\ue)
        \\
        &\ge\nu C_a^{-1}\norm[1,h]{\ue}^2 - C_t\norm[1,h]{\uhu[h]}\norm[1,h]{\ue}^2
        \\
        &\ge\left(\nu C_a^{-1} - C_tC_I(1+C_{\rm P}^2)\nu^{-1}\norm[H^{-1}(\Omega)^d]{\vf}\right)\norm[1,h]{\ue}^2
        \gtrsim\nu\norm[1,h]{\ue}^2,
      \end{aligned}
    \end{equation}
    where we have used the coercivity of $a_h$ expressed by the first inequality in \eqref{eq:ah.stab.cont} together with the boundedness \eqref{eq:th.cont} of $t_h$ to pass to the second line, the boundedness \eqref{eq:Ih.cont} of $\Ih$ with the standard a priori estimate $\norm[H^1(\Omega)^d]{\vu}\le(1+C_{\rm P}^2)\nu^{-1}\norm[H^{-1}(\Omega)^d]{\vf}$ on the exact solution to infer
    \begin{equation}\label{eq:a-priori:uhu}
      \norm[1,h]{\uhu[h]}\le C_I\norm[H^1(\Omega)^d]{\vu}\le C_I(1+C_{\rm P}^2)\nu^{-1}\norm[H^{-1}(\Omega)^d]{\vf},
    \end{equation}
    and the data smallness assumption \eqref{eq:small.data} to conclude.

  \item \emph{Estimate on the velocity.}
    Observing that $\vf=-\nu\LAPL\vu + \GRAD\vu~\vu + \GRAD p$ a.e. in $\Omega$ (cf. \eqref{eq:strong:momentum}), it holds for all $\uv[h]\in\UhD$,
    $$
    |\Eh(\uv[h])| \le 
    \nu\left| \int_\Omega(\LAPL\vu)\SCAL\vv_h + a_h(\uhu[h],\uv[h])\right|
    + \left|\int_\Omega\vv_h\trans\GRAD\vu\MVPROD\vu - t_h(\uhu[h],\uhu[h],\uv[h])\right|
    + \left|\int_\Omega\GRAD p\SCAL\vv_h - b_h(\uv[h],\hph)\right|.
    $$
    Using \eqref{eq:ah.consistency}, \eqref{eq:th.consistency} and \eqref{eq:bh.consistency}, respectively, to estimate the three terms in the right-hand side, it is readily inferred that
    \begin{equation}\label{eq:sup.Eh}
      \mathrm{S}\eqbydef\sup_{\uv[h]\in\UhD,\norm[1,h]{\uv[h]}=1}|\Eh(\uv[h])|
      \lesssim
      \nu h^{k+1}\left(1 + \nu^{-1}\norm[H^2(\Omega)^d]{\vu}\right)\norm[H^{k+2}(\Omega)^d]{\vu} + h^{k+1}\norm[H^{k+1}(\Omega)]{p},
    \end{equation}
    so that, in particular,
    \begin{equation}\label{eq:upper.bnd.Eh}
      |\Eh(\ue)|\le\mathrm{S}\norm[1,h]{\ue}
      \lesssim\left[
      \nu h^{k+1}\left(1 + \nu^{-1}\norm[H^2(\Omega)^d]{\vu}\right)\norm[H^{k+2}(\Omega)^d]{\vu} + h^{k+1}\norm[H^{k+1}(\Omega)]{p}
      \right]\norm[1,h]{\ue}.
    \end{equation}
    Combining \eqref{eq:lower.bnd.Eh} with \eqref{eq:upper.bnd.Eh}, the estimate on the velocity in \eqref{eq:err.est} follows.

  \item \emph{Estimate on the pressure.}
    Let us now estimate the error on the pressure. We have
    \begin{equation}\label{eq:err.est:basic.p}
      \begin{aligned}
        \norm[L^2(\Omega)]{\eps}
        &\lesssim\sup_{\uv[h]\in\UhD,\norm[1,h]{\uv[h]}=1} b_h(\uv[h],\eps)
        \\
        &=\sup_{\uv[h]\in\UhD,\norm[1,h]{\uv[h]}=1}\left[
        \Eh(\uv[h]) - \nu a_h(\ue[h],\uv[h]) - t_h(\uu[h],\uu[h],\uv[h]) +  t_h(\uhu[h],\uhu[h],\uv[h]) 
        \right]
        \\
        &=\sup_{\uv[h]\in\UhD,\norm[1,h]{\uv[h]}=1}\left[
        \Eh(\uv[h]) - \nu a_h(\ue[h],\uv[h]) - t_h(\ue[h],\uu[h],\uv[h]) -  t_h(\uhu[h],\ue[h],\uv[h]) 
        \right]
        \\
        &\lesssim\mathrm{S} + \nu\left(1 + \nu^{-1}\norm[1,h]{\uu[h]} + \nu^{-1}\norm[1,h]{\uhu[h]}\right)\norm[1,h]{\ue[h]}
        \\
        &\lesssim\mathrm{S} + \nu\left(1 + \nu^{-2}\norm[L^2(\Omega)^d]{\vf} + \nu^{-2}\norm[H^{-1}(\Omega)^d]{\vf}\right)\norm[1,h]{\ue[h]}
        \lesssim\mathrm{S} + \nu\norm[1,h]{\ue[h]},
      \end{aligned}
    \end{equation}
    In \eqref{eq:err.est:basic.p}, we have used the inf-sup inequality \eqref{eq:bh.inf-sup} on $b_h$ in the first line and the error equation \eqref{eq:err.eq:momentum} to pass to the second line;
    to pass to the third line, we have inserted $\pm t_h(\uhu[h],\uu[h],\uv[h])$ and used the linearity of $t_h$ in its first and second arguments;
    to pass to the fourth line, we have used the boundedness \eqref{eq:ah.stab.cont} of $a_h$ and \eqref{eq:th.cont} of $t_h$;
    to pass to the fifth line, we have used the a priori bounds \eqref{eq:a-priori} on $\norm[1,h]{\uu[h]}$ and \eqref{eq:a-priori:uhu} on $\norm[1,h]{\uhu[h]}$;
    the data smallness assumption \eqref{eq:small.data} gives the conclusion.
    The estimate on the pressure then follows using \eqref{eq:sup.Eh} and \eqref{eq:err.est}, respectively, to further bound the addends in the right-hand side of \eqref{eq:err.est:basic.p}.\qedhere
  \end{asparaenum}
\end{proof}

\subsection{Numerical example}

To close this section, we provide a numerical example that demonstrates the convergence properties of our method.
We solve on the two-dimensional square domain $\Omega\eqbydef(-0.5,1.5)\times(0,2)$ the Dirichlet problem corresponding to the exact solution $(\vu,p)$ of \cite{Kovasznay:48} with $\vu=(u_1,u_2)$ such that, introducing the Reynolds number $\Reynolds\eqbydef(2\nu)^{-1}$ and letting $\lambda\eqbydef\Reynolds-\left(\Reynolds^2 + 4\pi^2\right)^{\nicefrac12}$,
$$
u_1(\vec{x}) \eqbydef 1-\exp(\lambda x_1)\cos(2\pi x_2),\qquad
u_2(\vec{x}) \eqbydef \frac{\lambda}{2\pi}\exp(\lambda x_1)\sin(2\pi x_2),
$$
and pressure given by
$$
p(\vec{x}) \eqbydef -\frac12\exp(2\lambda x_1) + \frac{\lambda}{2}\left(\exp(4\lambda)-1\right).
$$
We take here $\nu=1$ and consider two sequences of refined meshes obtained by linearly mapping on $\Omega$ the mesh family 2 of~\cite{Herbin.Hubert:08} and the (predominantly) hexagonal mesh family of \cite{Di-Pietro.Lemaire:15} (both meshes were originally defined on the unit square).
The implementation uses the static condensation procedure discussed in Remark~\ref{rem:static.cond}.
The convergence results for $k=2$ and $k=3$ are reported in Figures~\ref{fig:cartesian} and~\ref{fig:hexagonal}, respectively.
Using the notation of Theorem~\ref{thm:err.est}, we separately plot the $H^1$-error on the velocity $\norm[1,h]{\uu[h]-\uhu[h]}$, the $L^2$-error on the pressure $\norm[L^2(\Omega)]{p_h-\hph}$, and the $L^2$-error on the velocity $\norm[L^2(\Omega)^d]{\vu_h-\vhu_h}$.
For the sake of completeness, we consider both the trilinear forms $t_h$ given by \eqref{eq:th} and $t_h^{\rm HDG}$ given by \eqref{eq:th.HDG} (with $\eta=0$).
In both cases, we obtain similar results, and the $H^1$-error on the velocity as well as the $L^2$-error on the pressure converge as $h^{k+1}$ as expected.
The $L^2$-error on the velocity, on the other hand, converges as $h^{k+2}$.
Notice also that this means that the error $\norm[L^2(\Omega)^d]{\ph\uu[h]-\vu}$ can be proved to converge as $h^{k+2}$ following a similar reasoning as in~\cite[Corollary~4.6]{Aghili.Boyaval.ea:15}.
The details are omitted for the sake of brevity.


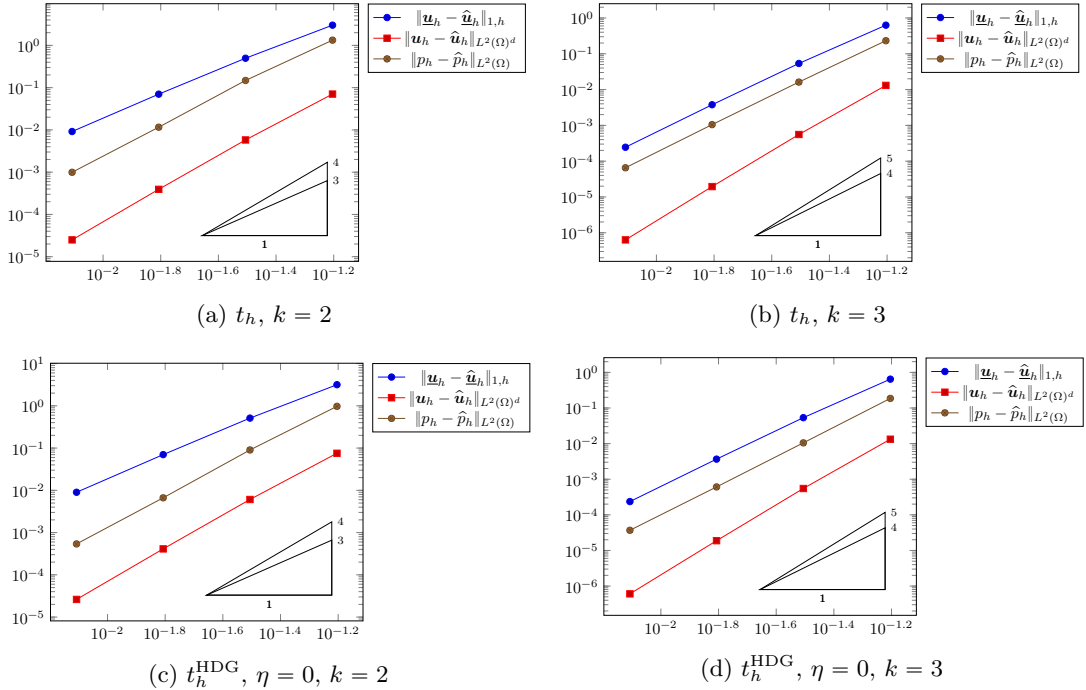
\begin{figure}[ht]\centering
  \begin{minipage}{0.45\textwidth}\centering
    \begin{tikzpicture}[scale=0.60]
      \begin{loglogaxis}[ legend pos = outer north east ]
        \addplot table[x=meshsize,y=err_u]{../dat/dpk/ns_2_mesh2.dat};
        \addplot table[x=meshsize,y=err_l2_u]{../dat/dpk/ns_2_mesh2.dat};    
        \addplot table[x=meshsize,y=err_p]{../dat/dpk/ns_2_mesh2.dat};

        \logLogSlopeTriangle{0.90}{0.4}{0.1}{3}{black};
        \logLogSlopeTriangle{0.90}{0.4}{0.1}{4}{black};
        \legend{{$\norm[1,h]{\uu[h]-\uhu[h]}$},{$\norm[L^2(\Omega)^d]{\vu_h-\vhu_h}$},{$\norm[L^2(\Omega)]{p_h-\hph}$}}
      \end{loglogaxis} 
    \end{tikzpicture}
    \subcaption{$t_h$, $k=2$}    
  \end{minipage}
  \begin{minipage}{0.45\textwidth}\centering
    \begin{tikzpicture}[scale=0.60]
      \begin{loglogaxis}[ legend pos = outer north east ]
        \addplot table[x=meshsize,y=err_u]{../dat/dpk/ns_3_mesh2.dat};
        \addplot table[x=meshsize,y=err_l2_u]{../dat/dpk/ns_3_mesh2.dat};    
        \addplot table[x=meshsize,y=err_p]{../dat/dpk/ns_3_mesh2.dat};

        \logLogSlopeTriangle{0.90}{0.4}{0.1}{4}{black};
        \logLogSlopeTriangle{0.90}{0.4}{0.1}{5}{black};
        \legend{{$\norm[1,h]{\uu[h]-\uhu[h]}$},{$\norm[L^2(\Omega)^d]{\vu_h-\vhu_h}$},{$\norm[L^2(\Omega)]{p_h-\hph}$}}
      \end{loglogaxis}
    \end{tikzpicture}
    \subcaption{$t_h$, $k=3$}    
  \end{minipage}
  \vspace{0.25cm} \\
  \begin{minipage}{0.45\textwidth}\centering
    \begin{tikzpicture}[scale=0.60]
      \begin{loglogaxis}[ legend pos = outer north east ]
        \addplot table[x=meshsize,y=err_u]{../dat/qs/ns_2_mesh2.dat};
        \addplot table[x=meshsize,y=err_l2_u]{../dat/qs/ns_2_mesh2.dat};    
        \addplot table[x=meshsize,y=err_p]{../dat/qs/ns_2_mesh2.dat};

        \logLogSlopeTriangle{0.90}{0.4}{0.1}{3}{black};
        \logLogSlopeTriangle{0.90}{0.4}{0.1}{4}{black};
        \legend{{$\norm[1,h]{\uu[h]-\uhu[h]}$},{$\norm[L^2(\Omega)^d]{\vu_h-\vhu_h}$},{$\norm[L^2(\Omega)]{p_h-\hph}$}}        
      \end{loglogaxis}
    \end{tikzpicture}
    \subcaption{$t_h^{\rm HDG}$, $\eta=0$, $k=2$}    
  \end{minipage}
  \begin{minipage}{0.45\textwidth}\centering
    \begin{tikzpicture}[scale=0.60]
      \begin{loglogaxis}[ legend pos = outer north east ]
        \addplot table[x=meshsize,y=err_u]{../dat/qs/ns_3_mesh2.dat};
        \addplot table[x=meshsize,y=err_l2_u]{../dat/qs/ns_3_mesh2.dat};    
        \addplot table[x=meshsize,y=err_p]{../dat/qs/ns_3_mesh2.dat};

        \logLogSlopeTriangle{0.90}{0.4}{0.1}{4}{black};
        \logLogSlopeTriangle{0.90}{0.4}{0.1}{5}{black};
        \legend{{$\norm[1,h]{\uu[h]-\uhu[h]}$},{$\norm[L^2(\Omega)^d]{\vu_h-\vhu_h}$},{$\norm[L^2(\Omega)]{p_h-\hph}$}}        
      \end{loglogaxis}
    \end{tikzpicture}
    \subcaption{$t_h^{\rm HDG}$, $\eta=0$, $k=3$}    
  \end{minipage}
  \caption{Cartesian mesh family, errors versus $h$. The triangles indicate reference slopes. The trilinear forms $t_h$ and $t_h^{\rm HDG}$ are defined by~\eqref{eq:th} and~\eqref{eq:th.HDG}, respectively.\label{fig:cartesian}}
\end{figure}


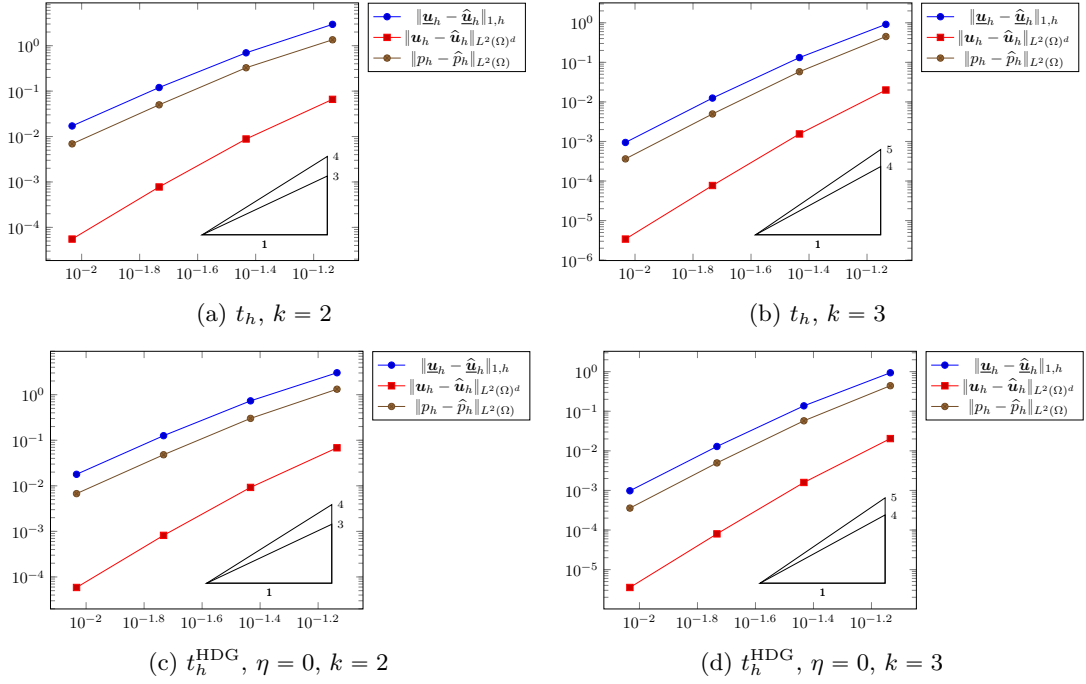
\begin{figure}[ht]\centering
  \centering
  \begin{minipage}{0.45\textwidth}\centering
    \begin{tikzpicture}[scale=0.60]
      \begin{loglogaxis}[ legend pos = outer north east ]
        \addplot table[x=meshsize,y=err_u]{../dat/dpk/ns_2_pi6_tiltedhexagonal.dat};
        \addplot table[x=meshsize,y=err_l2_u]{../dat/dpk/ns_2_pi6_tiltedhexagonal.dat};    
        \addplot table[x=meshsize,y=err_p]{../dat/dpk/ns_2_pi6_tiltedhexagonal.dat};

        \logLogSlopeTriangle{0.90}{0.4}{0.1}{3}{black};
        \logLogSlopeTriangle{0.90}{0.4}{0.1}{4}{black};
        \legend{{$\norm[1,h]{\uu[h]-\uhu[h]}$},{$\norm[L^2(\Omega)^d]{\vu_h-\vhu_h}$},{$\norm[L^2(\Omega)]{p_h-\hph}$}}        
      \end{loglogaxis}
    \end{tikzpicture}
    \subcaption{$t_h$, $k=2$}
  \end{minipage}
  \begin{minipage}{0.45\textwidth}\centering
    \begin{tikzpicture}[scale=0.60]
      \begin{loglogaxis}[ legend pos = outer north east ]
        \addplot table[x=meshsize,y=err_u]{../dat/dpk/ns_3_pi6_tiltedhexagonal.dat};
        \addplot table[x=meshsize,y=err_l2_u]{../dat/dpk/ns_3_pi6_tiltedhexagonal.dat};    
        \addplot table[x=meshsize,y=err_p]{../dat/dpk/ns_3_pi6_tiltedhexagonal.dat};

        \logLogSlopeTriangle{0.90}{0.4}{0.1}{4}{black};
        \logLogSlopeTriangle{0.90}{0.4}{0.1}{5}{black};
        \legend{{$\norm[1,h]{\uu[h]-\uhu[h]}$},{$\norm[L^2(\Omega)^d]{\vu_h-\vhu_h}$},{$\norm[L^2(\Omega)]{p_h-\hph}$}}        
      \end{loglogaxis}
    \end{tikzpicture}
    \subcaption{$t_h$, $k=3$}    
  \end{minipage}
  \vspace{0.25cm} \\
  \begin{minipage}{0.45\textwidth}\centering
    \begin{tikzpicture}[scale=0.60]
      \begin{loglogaxis}[ legend pos = outer north east ]
        \addplot table[x=meshsize,y=err_u]{../dat/qs/ns_2_pi6_tiltedhexagonal.dat};
        \addplot table[x=meshsize,y=err_l2_u]{../dat/qs/ns_2_pi6_tiltedhexagonal.dat};    
        \addplot table[x=meshsize,y=err_p]{../dat/qs/ns_2_pi6_tiltedhexagonal.dat};

        \logLogSlopeTriangle{0.90}{0.4}{0.1}{3}{black};
        \logLogSlopeTriangle{0.90}{0.4}{0.1}{4}{black};
        \legend{{$\norm[1,h]{\uu[h]-\uhu[h]}$},{$\norm[L^2(\Omega)^d]{\vu_h-\vhu_h}$},{$\norm[L^2(\Omega)]{p_h-\hph}$}}        
      \end{loglogaxis}
    \end{tikzpicture}
    \subcaption{$t_h^{\rm HDG}$, $\eta=0$, $k=2$}    
  \end{minipage}
  \begin{minipage}{0.45\textwidth}\centering
    \begin{tikzpicture}[scale=0.60]
      \begin{loglogaxis}[ legend pos = outer north east ]
        \addplot table[x=meshsize,y=err_u]{../dat/qs/ns_3_pi6_tiltedhexagonal.dat};
        \addplot table[x=meshsize,y=err_l2_u]{../dat/qs/ns_3_pi6_tiltedhexagonal.dat};    
        \addplot table[x=meshsize,y=err_p]{../dat/qs/ns_3_pi6_tiltedhexagonal.dat};

        \logLogSlopeTriangle{0.90}{0.4}{0.1}{4}{black};
        \logLogSlopeTriangle{0.90}{0.4}{0.1}{5}{black};
        \legend{{$\norm[1,h]{\uu[h]-\uhu[h]}$},{$\norm[L^2(\Omega)^d]{\vu_h-\vhu_h}$},{$\norm[L^2(\Omega)]{p_h-\hph}$}}        
      \end{loglogaxis}
    \end{tikzpicture}
    \subcaption{$t_h^{\rm HDG}$, $\eta=0$, $k=3$}
  \end{minipage}
  \caption{Hexagonal mesh family, errors versus $h$. The triangles indicate reference slopes. The trilinear forms $t_h$ and $t_h^{\rm HDG}$ are defined by~\eqref{eq:th} and~\eqref{eq:th.HDG}, respectively.\label{fig:hexagonal}}
\end{figure}


\section{Properties of the discrete bilinear and trilinear forms}\label{sec:properties}

We gather in this section the proofs of Propositions \ref{prop:ah.properties}, \ref{prop:th.properties} and \ref{prop:bh.properties}.

\subsection{Viscous bilinear form}\label{sec:ah.properties}

\begin{proof}[Proof of Proposition \ref{prop:ah.properties}]
  We only sketch the proof and provide references for the details.

  \begin{asparaenum}[1)]
  \item This norm equivalence follows taking $p=2$ in \cite[Lemma~5.2]{Di-Pietro.Droniou:16}, where the scalar case is considered; cf. also \cite[Lemma 4]{Di-Pietro.Ern.ea:14}, where a slightly different expression for $a_T$ is studied (cf. Remark \ref{rem:ah}).

  \item We adapt the arguments of \cite[Theorem 8]{Di-Pietro.Ern.ea:14}.
    For the sake of brevity, we let $\uhv[h]\eqbydef\Ih\vv$ in what follows.
    Integrating by parts element-by-element, and using the single-valuedness of $\GRAD\vv\MVPROD\normal_F$ at interfaces and the fact that $\vw_F=\vec{0}$ on boundary faces to insert $\vw_F$ into the second term, we have
    \begin{equation}\label{eq:ah.consistency:1}
      \int_\Omega\LAPL\vv\SCAL\vw_h
      = -\sum_{T\in\Th}\left(
      \int_T\GRAD\vv\SSCAL\GRAD\vw_T + \sum_{F\in\Fh[T]} \int_F(\vw_F-\vw_T)\trans\GRAD\vv\MVPROD\normal_{TF}
      \right).
    \end{equation}
    On the other hand, using on each element $T\in\Th$ the definition \eqref{eq:GT'} of $\GT$ (with $\uv[T]=\uw[T]$ and $\mtau=\GT\uhv[T]$), we have
    \begin{equation}\label{eq:ah.consistency:2}
      a_h(\uhv[h],\uw[h])
      = \sum_{T\in\Th}\left(
      \int_T\GT\uhv[T]\SSCAL\GRAD\vw_T + \sum_{F\in\Fh[T]} \int_F(\vw_F-\vw_T)\trans\GT\uhv[T]\MVPROD\normal_{TF}
      \right) + s_{h}(\uhv[h],\uw[h]).
    \end{equation}
    Summing \eqref{eq:ah.consistency:1} and \eqref{eq:ah.consistency:2}, observing that the first terms in parentheses cancel out as a result of the Euler equation \eqref{eq:GT.euler} for $\GT$, and using the Cauchy--Schwarz inequality followed by the trace approximation properties \eqref{eq:Gh.approx} of $\GT$, the consistency properties \eqref{eq:sh.consistency} of $s_h$, and the norm equivalence \eqref{eq:ah.stab.cont}, we get
    $$
    \begin{aligned}
      &\left|
      \int_\Omega\LAPL\vv\SCAL\vw_h + a_h(\uhv[h],\uw[h])
      \right|
      \\
      &\qquad\lesssim
      \left(
      \sum_{T\in\Th} h_T\norm[L^2(\partial T)^{d\times d}]{\GT\uhv[T]-\GRAD\vv}^2
      + s_h(\uhv[h],\uhv[h])
      \right)^{\nicefrac12}\times\left(
      \sum_{T\in\Th}\seminorm[1,\partial T]{\uw[T]}^2
      + s_h(\uw[h],\uw[h])
      \right)^{\nicefrac12}
      \\
      &\qquad\lesssim h^{k+1}\norm[H^{k+2}(\Omega)^d]{\vv}\norm[1,h]{\uw[h]},
    \end{aligned}
    $$
    which concludes the proof of \eqref{eq:ah.consistency}.
    
  \item The sequential consistency can be proved following steps 1) and 2) of \cite[Theorem~4.6]{Di-Pietro.Droniou:16}, where the scalar case is considered.\qedhere
  \end{asparaenum}
\end{proof}

\subsection{Convective trilinear form}\label{sec:th.properties}

\begin{proof}[Proof of Proposition \ref{prop:th.properties}]
  \begin{asparaenum}[1)]
  \item \emph{Skew-symmetry.} This property is straightforward from the definition of $t_h$.
  \item \emph{Boundedness.} For all $\uw[h],\uu[h],\uv[h]\in\UhD$, using H\"{o}lder inequalities, we have
    $$
    \begin{aligned}
      |t_h(\uw[h],\uu[h],\uv[h])|
      &\lesssim\norm[L^4(\Omega)^d]{\vv_h}\norm[L^2(\Omega)^{d\times d}]{\Gh[2k]\uu[h]}\norm[L^4(\Omega)^d]{\vw_h}
      + \norm[L^4(\Omega)^d]{\vu_h}\norm[L^2(\Omega)^{d\times d}]{\Gh[2k]\uv[h]}\norm[L^4(\Omega)^d]{\vw_h}
      \\
      &\lesssim\norm[1,h]{\uv[h]}\norm[1,h]{\uu[h]}\norm[1,h]{\uw[h]},
    \end{aligned}
    $$
    where the conclusion follows using several times the discrete Sobolev embedding \eqref{eq:sobolev} with $p=4$ and the boundedness \eqref{eq:Gh.cont} of $\Gh[2k]$.
    
  \item \emph{Consistency.} Set, for the sake of brevity, $\uhv[h]\eqbydef\Ih\vv$.
    Integrating by parts element-by-element, recalling that $\DIV\vv=0$, and using the single-valuedness of $(\vv\SCAL\normal_F)\vv$ at interfaces together with the fact that $\vw_F=\vec{0}$ on boundary faces to insert $\vw_F$ into the third term, we have
    \begin{equation}\label{eq:th.approx:1}
      \int_\Omega\vw_h\trans\GRAD\vv\MVPROD\vv
      = \frac12\left(
      \int_\Omega\vw_h\trans\GRAD\vv\MVPROD\vv
      - \int_\Omega\vv\trans\GRADh\vw_h\MVPROD\vv
      - \sum_{T\in\Th}\sum_{F\in\Fh[T]}\int_F(\vv\SCAL\normal_{TF})(\vw_F-\vw_T)\SCAL\vv
      \right).
    \end{equation}
    On the other hand, using on each element $T\in\Th$ the definition \eqref{eq:GT'} of $\GT[2k]$ (with $\uv[T]=\uw[T]$ and $\mtau=\vhv_T\otimes\vhv_T$), we have
    \begin{equation}\label{eq:th.approx:2}
      t_h(\uhv[h],\uhv[h],\uw[h])
      = \frac12\left(
      \int_{\Omega}\vw_h\trans\Gh[2k]\uhv[h]\MVPROD\vhv_h
      - \int_{\Omega}\vhv_h\trans\GRADh\vw_h\MVPROD\vhv_h
      - \hspace{-0.75ex}\sum_{T\in\Th}\sum_{F\in\Fh[T]}\int_F(\vhv_T\SCAL\normal_{TF})(\vw_F-\vw_T)\SCAL\vhv_T
      \right).
    \end{equation}
    Subtracting \eqref{eq:th.approx:2} from \eqref{eq:th.approx:1} and inserting in the right-hand side of the resulting expression the quantity
    $$
    \pm\frac12\left(
    \int_\Omega\vw_h\trans\Gh[2k]\uhv[h]\MVPROD\vv
    + \int_\Omega\vv\trans\GRADh\vw_h\MVPROD\vhv_h
    + \sum_{T\in\Th}\sum_{F\in\Fh[T]}\int_F(\vv\SCAL\normal_{TF})(\vw_F-\vw_T)\SCAL\vhv_T
    \right),
    $$
    we arrive at
    \begin{equation}\label{eq:th.consistency:basic}
      \begin{aligned}
        \int_\Omega\vw_h\trans\GRAD\vv\MVPROD\vv
        - t_h(\Ih\vv,\Ih\vv,\uw[h])
        &=
        \frac12\int_\Omega\vw_h\trans(\GRAD\vv-\Gh[2k]\uhv[h])\MVPROD\vv
        + \frac12\int_\Omega\vw_h\trans\Gh[2k]\uhv[h]\MVPROD(\vv-\vhv_h)
        \\
        &\quad + \frac12\int_\Omega(\vhv_h-\vv)\trans\GRADh\vw_h\MVPROD\vhv_h
        + \frac12\int_\Omega\vv\trans\GRADh\vw_h\MVPROD(\vhv_h-\vv)
        \\
        &\quad + \frac12\sum_{T\in\Th}\sum_{F\in\Fh[T]}\int_F((\vhv_T-\vv)\SCAL\normal_{TF})(\vw_F-\vw_T)\SCAL\vhv_T
        \\
        &\quad + \frac12\sum_{T\in\Th}\sum_{F\in\Fh[T]}\int_F(\vv\SCAL\normal_{TF})(\vw_F-\vw_T)\SCAL(\vhv_T-\vv).
      \end{aligned}
    \end{equation}
    Denote by $\term_1,\ldots,\term_6$ the addends in the right-hand side of the above expression.
    For the first term, using for all $T\in\Th$ the Euler equation \eqref{eq:GT.euler} with $\mtau=\vw_T\otimes\mproj[T]{0}\vv\in\Poly{k}(T)^{d\times d}$, we infer
    $$
    \term_1 = \frac12\int_\Omega\vw_h\trans(\GRAD\vv-\Gh[2k]\uhv[h])(\vv-\vproj{0}\vv).
    $$
    Hence, using the H\"{o}lder inequality followed by the approximation properties \eqref{eq:Gh.approx} of $\Gh[2k]$ and \eqref{eq:lproj.approx} of $\vproj{0}$ (with $m=0$, $p=4$, $s=1$), we obtain
    \begin{equation}\label{eq:th.consistency:T1}
      |\term_1|
      \lesssim h^{k+1}\norm[L^4(\Omega)^d]{\vw_h}\norm[H^{k+1}(\Omega)^d]{\vv}\norm[W^{1,4}(\Omega)^d]{\vv}
      \lesssim h^{k+1}\norm[1,h]{\uw[h]}\norm[H^{k+1}(\Omega)^d]{\vv}\norm[H^2(\Omega)^d]{\vv},
    \end{equation}
    where the conclusion follows using the discrete Sobolev embedding \eqref{eq:sobolev} with $p=4$ to bound the first factor and the continuous injection $H^2(\Omega)\to W^{1,4}(\Omega)$ valid in $d\in\{2,3\}$ on domains satisfying the cone condition to bound the third (cf. \cite[Theorem 4.12]{Adams.Fournier:03}).
    
    Using again the H\"{o}lder inequality, the boundedness \eqref{eq:Gh.cont} of $\Gh[2k]$ and \eqref{eq:Ih.cont} of $\Ih$ to infer $\norm[L^2(\Omega)^{d\times d}]{\Gh[2k]\uhv[h]}\lesssim\norm[H^1(\Omega)^d]{\vv}$, and the approximation properties \eqref{eq:lproj.approx} of $\vproj{k}$ (with $m=0$, $p=4$, and $s=k+1$), we infer
    \begin{equation}\label{eq:th.consistency:T2}
      |\term_2|
      \lesssim h^{k+1}\norm[L^4(\Omega)^d]{\vw_h}\norm[H^1(\Omega)^d]{\vv}\norm[W^{k+1,4}(\Omega)^d]{\vv}
      \lesssim h^{k+1}\norm[1,h]{\uw[h]}\norm[H^1(\Omega)^d]{\vv}\norm[H^{k+2}(\Omega)^d]{\vv},
    \end{equation}
    where the conclusion follows from the discrete Sobolev embedding \eqref{eq:sobolev} with $p=4$ together with the continuous injection $H^{k+2}(\Omega)\to W^{k+1,4}(\Omega)$ valid for all $k\ge 0$ and $d\in\{2,3\}$ on domains satisfying the cone condition (cf. \cite[Theorem 4.12]{Adams.Fournier:03}).
    
    Proceeding similarly, we have for the third and fourth terms
    \begin{equation}\label{eq:th.consistency:T3.T4}
      \begin{aligned}
        |\term_3| + |\term_4|
        &\lesssim h^{k+1}\norm[L^2(\Omega)^{d\times d}]{\GRADh\vw_h}\left(
        \norm[L^4(\Omega)^d]{\vhv_h} + \norm[L^4(\Omega)^d]{\vv}
        \right)\norm[W^{k+1,4}(\Omega)^d]{\vv}
        \\
        &\lesssim
        h^{k+1}\norm[1,h]{\uw[h]}\norm[H^1(\Omega)^d]{\vv}\norm[H^{k+2}(\Omega)^d]{\vv},
      \end{aligned}
    \end{equation}
    where, to pass to the second line, we have used the definition \eqref{eq:norm1h} of the $\norm[1,h]{{\cdot}}$-norm to bound the first factor, the discrete and continuous Sobolev embeddings to estimate the $L^4(\Omega)^d$-norms in the second factor, the boundedness \eqref{eq:Ih.cont} of $\Ih$ to further bound $\norm[1,h]{\uhv[h]}\lesssim\norm[H^1(\Omega)^d]{\vv}$, and the continuous injection $H^{k+2}(\Omega)\to W^{k+1,4}(\Omega)$ to conclude.

    Finally, for the fifth and sixth term, using H\"{o}lder inequalities and the trace approximation properties \eqref{eq:lproj.approx} of the $L^2$-orthogonal projector (with $m=0$, $p=4$, and $s=k+1$), we obtain
    \begin{equation}\label{eq:th.consistency:T5.T6}
      \begin{aligned}
        |\term_5| + |\term_6|
        &\lesssim h^{k+1}\norm[W^{k+1,4}(\Omega)^d]{\vv}\left(
        \sum_{T\in\Th}\seminorm[1,\partial T]{\uw[T]}^2
        \right)^{\nicefrac12}\hspace{-1.5ex}\times\left(
        \sum_{T\in\Th} h_T \left(\norm[L^4(\partial T)^d]{\vv}^4 + \norm[L^4(\partial T)^d]{\vhv_T}^4\right)
        \right)^{\frac14}
        \\
        &\lesssim h^{k+1}\norm[H^{k+2}(\Omega)^d]{\vv}\norm[1,h]{\uw[h]}\norm[H^2(\Omega)^d]{\vv},
      \end{aligned}
    \end{equation}
    where, to pass to the second line, we have used the continuous injection $H^{k+2}(\Omega)\to W^{k+1,4}(\Omega)$ for the first factor, the definition \eqref{eq:norm1h} of the $\norm[1,h]{{\cdot}}$-norm for the second factor, and the continuous \eqref{eq:trace.cont} and discrete \eqref{eq:trace} trace inequalities with $p=4$ followed by the continuous injection $H^2(\Omega)\to W^{1,4}(\Omega)$ for the third factor.
    Taking absolute values in \eqref{eq:th.consistency:basic}, and using \eqref{eq:th.consistency:T1}--\eqref{eq:th.consistency:T5.T6} to bound the right-hand side, \eqref{eq:th.consistency} follows.
  \item \emph{Sequential consistency.}
    We have, letting for the sake of brevity $\uvhPhi\eqbydef\Ih\vPhi$,
    $$
    t_h(\uv[h],\uv[h],\uvhPhi) =
    \frac12\int_\Omega\vhPhi_h\trans\Gh[2k]\uv[h]\MVPROD\vv_h
    - \frac12\int_\Omega\vv_h\trans\Gh[2k]\uvhPhi\MVPROD\vv_h
    \eqbydef\term_1 + \term_2.
    $$
    Since $\vv_h\to\vv$ and $\vhPhi_h\to\vPhi$ strongly in $L^4(\Omega)^d$, $\vhPhi_h\otimes\vv_h\to\vPhi\otimes\vv$ strongly in $L^2(\Omega)^{d\times d}$. Hence, recalling that $\Gh[2k]\uv[h]\rightharpoonup\GRAD\vv$ weakly in $L^2(\Omega)^{d\times d}$ owing to point 4) in Proposition \ref{prop:Gh.properties}, we infer that
    $$
    \term_1=\frac12\int_\Omega\Gh[2k]\uv[h]\SSCAL (\vhPhi_h\otimes\vv_h)\to\frac12\int_\Omega\vPhi\trans\GRAD\vv\MVPROD\vv.
    $$
    For the second term, observing that $\vv_h\otimes\vv_h\to\vv\otimes\vv$ and $\Gh[2k]\uvhPhi\to\GRAD\vPhi$ strongly in $L^2(\Omega)^{d\times d}$, we readily get
    $$
    \term_2=\frac12\int_\Omega\Gh[2k]\uvhPhi\SSCAL (\vv_h\otimes\vv_h)\to\frac12\int_\Omega\vv\trans\GRAD\vPhi\MVPROD\vv.
    $$
    The conclusion follows from the above results recalling the definition \eqref{eq:a.b.t} of $t$.
    \qedhere
  \end{asparaenum}
\end{proof}

\subsection{Velocity-pressure coupling bilinear form}\label{sec:bh.properties}

\begin{proof}[Proof of Proposition \ref{prop:bh.properties}]
  \begin{asparaenum}[1)]    
  \item \emph{Inf-sup stability.} We deploy similar arguments as in \cite[Lemma 4]{Boffi.Botti.ea:16} and \cite[Lemma 3]{Di-Pietro.Ern.ea:16}.
    Let $q_h\in\Ph$ and denote by $\mathrm{S}$ the supremum in \eqref{eq:bh.inf-sup}.
    Observing that $q_h\in P$, from the surjectivity of the continuous divergence operator from $\U$ to $P$ we infer the existence of $\vv_{q_h}\in\U$ such that $\DIV\vv_{q_h}=q_h$ and $\norm[H^1(\Omega)^d]{\vv_{q_h}}\lesssim\norm[L^2(\Omega)]{q_h}$.
    Then, we have
    $$
    \norm[L^2(\Omega)]{q_h}^2
    = -b_h(\Ih\vv_{q_h},q_h)
    \le\mathrm{S}\norm[1,h]{\Ih\vv_{q_h}}
    \lesssim\mathrm{S}\norm[H^1(\Omega)^d]{\vv_{q_h}}
    \lesssim\mathrm{S}\norm[L^2(\Omega)]{q_h},
    $$
    where we have used the commuting property \eqref{eq:Dh.commuting} for $\Dh$, the definition of the supremum, the boundeness \eqref{eq:Ih.cont} of $\Ih$, and $\norm[H^1(\Omega)^d]{\vv_{q_h}}\lesssim\norm[L^2(\Omega)]{q_h}$ to conclude.

  \item \emph{Consistency.}
    Integrating by parts element-by-element, and using the fact that the jumps of $q$ vanish at interfaces by the assumed regularity and that $\vv_F=\vec{0}$ on boundary faces to insert $\vv_F$ into the second term, we have
    \begin{equation}\label{eq:bh.consistency:1}
      \int_\Omega\GRAD q\SCAL\vv_h = -\sum_{T\in\Th}\left(
      \int_T q (\DIV\vv_T) + \sum_{F\in\Fh[T]}\int_F q (\vv_F - \vv_T)\SCAL\normal_{TF}
      \right).
    \end{equation}
    On the other hand, using \eqref{eq:DT'} on each element $T\in\Th$ to express the right-hand side of \eqref{eq:bh}, we have
    \begin{equation}\label{eq:bh.consistency:2}
      -b_h(\uv[h],\lproj{k}q)
      = \sum_{T\in\Th}\left(
      \int_T \lproj[T]{k}q (\DIV\vv_T) + \sum_{F\in\Fh[T]}\int_F \lproj[T]{k} q (\vv_F - \vv_T)\SCAL\normal_{TF}
      \right).
    \end{equation}
    Summing \eqref{eq:bh.consistency:1} and \eqref{eq:bh.consistency:2}, observing that the first terms in parentheses cancel out by the definition \eqref{eq:lproj} of $\lproj[T]{k}$ since $(\DIV\vv_T)\in\Poly{k-1}(T)\subset\Poly{k}(T)$, and using for the second terms the Cauchy--Schwarz inequality followed by the trace approximation properties \eqref{eq:lproj.approx} of $\lproj[T]{k}$, we infer that
    $$
    \left|
    \int_\Omega\GRAD q\SCAL\vv_h - b_h(\uv[h],\lproj{k}q)    
    \right|
    \le \left(\sum_{T\in\Th} h_T\norm[L^2(\partial T)]{\lproj[T]{k}q - q}^2\right)^{\nicefrac12}\hspace{-1.25ex}\times\left(
    \sum_{T\in\Th}\seminorm[1,\partial T]{\uv[T]}^2
    \right)^{\nicefrac12}
    \lesssim h^{k+1}\norm[H^{k+1}(\Omega)]{q}\norm[1,h]{\uv[h]}.
    $$
    Passing to the supremum in the above expression, \eqref{eq:bh.consistency} follows.
    
  \item \emph{Sequential consistency.}
    Recalling~\eqref{eq:DT.trGT}, $\Dh=\tr(\Gh)$ and the sequential consistency \eqref{eq:bh.seq.cons:1} is a straightforward consequence of point 2) in Proposition \ref{prop:Gh.properties} combined with a weak-strong convergence argument.
    Similarly, the sequential consistency \eqref{eq:bh.seq.cons:2} follows from the fact that $\Dh\uv[h]\rightharpoonup\DIV\vv$ weakly in $L^2(\Omega)$
    as a consequence of point 3) in Proposition \ref{prop:Gh.properties} and $\lproj{k}\varphi\to\varphi$ strongly in $L^2(\Omega)$.
    \qedhere
  \end{asparaenum}
\end{proof}


\begin{small}
  \bibliographystyle{plain}
  \bibliography{nsho}

\begin{thebibliography}{10}

\bibitem{Adams.Fournier:03}
R.~A. Adams and J.~J.~F. Fournier.
\newblock {\em Sobolev spaces}, volume 140 of {\em Pure and Applied Mathematics
  (Amsterdam)}.
\newblock Elsevier/Academic Press, Amsterdam, second edition, 2003.

\bibitem{Aghili.Boyaval.ea:15}
J.~Aghili, S.~Boyaval, and D.~A. Di~Pietro.
\newblock Hybridization of mixed high-order methods on general meshes and
  application to the {Stokes} equations.
\newblock {\em Comput. Meth. Appl. Math.}, 15(2):111--134, 2015.

\bibitem{Bassi.Botti.ea:12*1}
F.~Bassi, L.~Botti, A.~Colombo, and S.~Rebay.
\newblock Agglomeration based discontinuous {G}alerkin discretization of the
  {E}uler and {N}avier-{S}tokes equations.
\newblock {\em Comput. \& Fluids}, 61:77--85, 2012.

\bibitem{Bassi.Crivellini.ea:06}
F.~Bassi, A.~Crivellini, D.~A. Di~Pietro, and S.~Rebay.
\newblock An artificial compressibility flux for the discontinuous {Galerkin}
  solution of the incompressible {Navier-Stokes} equations.
\newblock {\em J. Comput. Phys.}, 218(2):794--815, 2006.

\bibitem{Bassi.Crivellini.ea:07}
F.~Bassi, A.~Crivellini, D.~A. Di~Pietro, and S.~Rebay.
\newblock An implicit high-order discontinuous {G}alerkin method for steady and
  unsteady incompressible flows.
\newblock {\em Comp. \& Fl.}, 36(10):1529--1546, 2007.

\bibitem{Boffi.Botti.ea:16}
D.~Boffi, M.~Botti, and D.~A. Di~Pietro.
\newblock A nonconforming high-order method for the {Biot} problem on general
  meshes.
\newblock {\em SIAM J. Sci. Comput.}, 38(3):A1508--A1537, 2016.

\bibitem{Boffi.Brezzi.ea:13}
D.~Boffi, F.~Brezzi, and M.~Fortin.
\newblock {\em Mixed finite element methods and applications}, volume~44 of
  {\em Springer Series in Computational Mathematics}.
\newblock Springer, Heidelberg, 2013.

\bibitem{Brenner.Scott:08}
S.~C. Brenner and L.~R. Scott.
\newblock {\em The mathematical theory of finite element methods}, volume~15 of
  {\em Texts in Applied Mathematics}.
\newblock Springer, New York, third edition, 2008.

\bibitem{Castillo.Cockburn.ea:00}
P.~Castillo, B.~Cockburn, I.~Perugia, and D.~Sch\"{o}tzau.
\newblock An a priori error analysis of the local discontinuous {Galerkin}
  method for elliptic problems.
\newblock {\em SIAM J. Numer. Anal.}, 38:1676--1706, 2000.

\bibitem{Cesmelioglu.Cockburn.ea:15}
A.~\c{C}e\c{s}melio\u{g}lu, B.~Cockburn, and W.~Qiu.
\newblock Analysis of an {HDG} method for the incompressible {Navier--Stokes}
  equations.
\newblock Submitted, 2015.

\bibitem{Chainais-Hillairet.Krell.ea:15}
C.~Chainais-Hillairet, S.~Krell, and A.~Mouton.
\newblock Convergence analysis of a {DDFV} scheme for a system describing
  miscible fluid flows in porous media.
\newblock {\em Numer. Methods Partial Differential Equations}, 31(3):723--760,
  2015.

\bibitem{Cockburn.Di-Pietro.ea:16}
B.~Cockburn, D.~A. Di~Pietro, and A.~Ern.
\newblock Bridging the {Hybrid High-Order} and {Hybridizable Discontinuous
  Galerkin} methods.
\newblock {\em ESAIM: Math. Model. Numer. Anal. (M2AN)}, 50(3):635--650, 2016.

\bibitem{Cockburn.Gopalakrishnan.ea:09}
B.~Cockburn, J.~Gopalakrishnan, and R.~Lazarov.
\newblock Unified hybridization of discontinuous {G}alerkin, mixed, and
  continuous {G}alerkin methods for second order elliptic problems.
\newblock {\em SIAM J. Numer. Anal.}, 47(2):1319--1365, 2009.

\bibitem{Cockburn.Kanschat.ea:05}
B.~Cockburn, G.~Kanschat, and D.~Sch\"{o}tzau.
\newblock A locally conservative {LDG} method for the incompressible
  {N}avier-{S}tokes equations.
\newblock {\em Math. Comp.}, 74(251):1067--1095, 2005.

\bibitem{Deimling:85}
K.~Deimling.
\newblock {\em Nonlinear functional analysis}.
\newblock Springer-Verlag, Berlin, 1985.

\bibitem{Di-Pietro.Droniou:16}
D.~A. Di~Pietro and J.~Droniou.
\newblock A {Hybrid High-Order} method for {Leray--Lions} elliptic equations on
  general meshes.
\newblock {\em Math. Comp.}, 2016.
\newblock To appear.

\bibitem{Di-Pietro.Droniou:16*1}
D.~A. Di~Pietro and J.~Droniou.
\newblock {$W^{s,p}$}-approximation properties of elliptic projectors on
  polynomial spaces, with application to the error analysis of a {Hybrid
  High-Order} discretisation of {Leray--Lions} problems, June 2016.
\newblock Submitted.
  Preprint~\href{http://arxiv.org/abs/1606.02832}{arXiv:1606.02832} [math.NA].

\bibitem{Di-Pietro.Ern:10}
D.~A. Di~Pietro and A.~Ern.
\newblock Discrete functional analysis tools for discontinuous {G}alerkin
  methods with application to the incompressible {N}avier--{S}tokes equations.
\newblock {\em Math. Comp.}, 79:1303--1330, 2010.

\bibitem{Di-Pietro.Ern:12}
D.~A. Di~Pietro and A.~Ern.
\newblock {\em Mathematical aspects of discontinuous {G}alerkin methods},
  volume~69 of {\em Math\'ematiques \& Applications}.
\newblock Springer-Verlag, Berlin, 2012.

\bibitem{Di-Pietro.Ern:15}
D.~A. Di~Pietro and A.~Ern.
\newblock A hybrid high-order locking-free method for linear elasticity on
  general meshes.
\newblock {\em Comput. Meth. Appl. Mech. Engrg.}, 283:1--21, 2015.

\bibitem{Di-Pietro.Ern.ea:14}
D.~A. Di~Pietro, A.~Ern, and S.~Lemaire.
\newblock An arbitrary-order and compact-stencil discretization of diffusion on
  general meshes based on local reconstruction operators.
\newblock {\em Comput. Meth. Appl. Math.}, 14(4):461--472, 2014.

\bibitem{Di-Pietro.Ern.ea:16}
D.~A. Di~Pietro, A.~Ern, A.~Linke, and F.~Schieweck.
\newblock A discontinuous skeletal method for the viscosity-dependent {Stokes}
  problem.
\newblock {\em Comput. Meth. Appl. Mech. Engrg.}, 306:175--195, 2016.

\bibitem{Di-Pietro.Lemaire:15}
D.~A. Di~Pietro and S.~Lemaire.
\newblock An extension of the {Crouzeix--Raviart} space to general meshes with
  application to quasi-incompressible linear elasticity and {Stokes} flow.
\newblock {\em Math. Comp.}, 84(291):1--31, 2015.

\bibitem{Dupont.Scott:80}
T.~Dupont and R.~Scott.
\newblock Polynomial approximation of functions in {S}obolev spaces.
\newblock {\em Math. Comp.}, 34(150):441--463, 1980.

\bibitem{Egger.Waluga:13}
H.~Egger and C.~Waluga.
\newblock {$hp$} analysis of a hybrid {DG} method for {S}tokes flow.
\newblock {\em IMA J. Numer. Anal.}, 33(2):687--721, 2013.

\bibitem{Eymard.Gallouet.ea:98}
R.~Eymard, T.~Gallou{\"e}t, M.~Ghilani, and R.~Herbin.
\newblock Error estimates for the approximate solutions of a nonlinear
  hyperbolic equation given by finite volume schemes.
\newblock {\em IMA J. Numer. Anal.}, 18(4):563--594, 1998.

\bibitem{Eymard.Gallouet.ea:00}
R.~Eymard, T.~Gallou{\"e}t, and R.~Herbin.
\newblock Finite volume methods.
\newblock In {\em Handbook of numerical analysis, {V}ol. {VII}}, Handb. Numer.
  Anal., VII, pages 713--1020. North-Holland, Amsterdam, 2000.

\bibitem{Eymard.Gallouet.ea:10}
R.~Eymard, T.~Gallou{\"e}t, and R.~Herbin.
\newblock Discretization of heterogeneous and anisotropic diffusion problems on
  general nonconforming meshes. {SUSHI}: a scheme using stabilization and
  hybrid interfaces.
\newblock {\em IMA J. Numer. Anal.}, 30(4):1009--1043, 2010.

\bibitem{Eymard.Herbin.ea:07}
R.~Eymard, R.~Herbin, and J.-C. Latch{\'e}.
\newblock Convergence analysis of a colocated finite volume scheme for the
  incompressible {Navier--Stokes} equations on general {2D} or {3D} meshes.
\newblock {\em SIAM J. Numer. Anal.}, 45(1):1--36, 2007.

\bibitem{Giorgiani.Fernandez-Mendez.ea:14}
G.~Giorgiani, S.~Fern\'{a}ndez-M\'{e}ndez, and A.~Huerta.
\newblock Hybridizable {Discontinuous Galerkin} with degree adaptivity for the
  incompressible {Navier--Stokes} equations.
\newblock {\em Computers \& Fluids}, 98:196--208, 2014.

\bibitem{Girault.Raviart:86}
V.~Girault and P.-A. Raviart.
\newblock {\em Finite element methods for {Navier--Stokes} equations}.
\newblock Springer-Verlag, Berlin, 1986.

\bibitem{Girault.Riviere.ea:05}
V.~Girault, B.~Rivi\`{e}re, and M.~F. Wheeler.
\newblock A discontinuous {G}alerkin method with nonoverlapping domain
  decomposition for the {S}tokes and {N}avier-{S}tokes problems.
\newblock {\em Math. Comp.}, 74(249):53--84, 2005.

\bibitem{Herbin.Hubert:08}
R.~Herbin and F.~Hubert.
\newblock Benchmark on discretization schemes for anisotropic diffusion
  problems on general grids.
\newblock In R.~Eymard and J.-M. H\'{e}rard, editors, {\em Finite Volumes for
  Complex Applications V}, pages 659--692. John Wiley \& Sons, 2008.

\bibitem{Karakashian.Katsaounis:00}
O.~Karakashian and T.~Katsaounis.
\newblock A discontinuous {G}alerkin method for the incompressible
  {N}avier--{S}tokes equations.
\newblock In {\em Discontinuous {G}alerkin methods ({N}ewport, {RI}, 1999)},
  volume~11 of {\em Lect. Notes Comput. Sci. Eng.}, pages 157--166. Springer,
  Berlin, 2000.

\bibitem{Kovasznay:48}
L.~S.~G. Kovasznay.
\newblock {Laminar flow behind a two-dimensional grid}.
\newblock {\em Proc. Camb. Philos. Soc.}, 44:58--62, 1948.

\bibitem{Lehrenfeld:10}
C.~Lehrenfeld.
\newblock {\em Hybrid Discontinuous Galerkin methods for solving incompressible
  flow problems}.
\newblock PhD thesis, Rheinisch-Westf\"alischen Technischen Hochschule Aachen,
  2010.

\bibitem{Nguyen.Peraire.ea:11}
N.C. Nguyen, J.~Peraire, and B.~Cockburn.
\newblock An implicit high-order hybridizable discontinuous {Galerkin} method
  for the incompressible {Navier--Stokes} equations.
\newblock {\em J. Comput. Phys.}, 230:1147--1170, 2011.

\bibitem{Oikawa:15}
I.~Oikawa.
\newblock A hybridized discontinuous {G}alerkin method with reduced
  stabilization.
\newblock {\em J. Sci. Comput.}, 65:327--340, 2015.

\bibitem{Qiu.Shi:16}
W.~Qiu and K.~Shi.
\newblock A superconvergent {HDG} method for the incompressible
  {Navier--Stokes} equations on general polyhedral meshes.
\newblock {\em IMA J. Numer. Anal.}, 2016.
\newblock Published online. DOI
  \href{http://dx.doi.org/10.1093/imanum/drv067}{10.1093/imanum/drv067}.

\bibitem{Riviere.Sardar:14}
B.~Rivi\`{e}re and S.~Sardar.
\newblock Penalty-free discontinuous {G}alerkin methods for incompressible
  {N}avier-{S}tokes equations.
\newblock {\em Math. Models Methods Appl. Sci.}, 24(6):1217--1236, 2014.

\bibitem{Tavelli.Dumbser:14}
M.~Tavelli and M.~Dumbser.
\newblock A staggered semi-implicit discontinuous {G}alerkin method for the two
  dimensional incompressible {N}avier-{S}tokes equations.
\newblock {\em Appl. Math. Comput.}, 248:70--92, 2014.

\bibitem{Ueckermann.Lermusiaux:16}
M.~P. Ueckermann and P.~F.~J. Lermusiaux.
\newblock Hybridizable discontinuous {G}alerkin projection methods for
  {N}avier-{S}tokes and {B}oussinesq equations.
\newblock {\em J. Comput. Phys.}, 306:390--421, 2016.

\bibitem{Wang.Ye:16}
J.~Wang and X.~Ye.
\newblock A weak {Galerkin} finite element method for the {Stokes} equations.
\newblock {\em Adv. Comput. Math.}, 42:155--174, 2016.

\end{thebibliography}
\end{small}

\end{document}